\def\C{\mathbb C}
\def\R{\mathbb R}
\def\N{\mathbb N}
\DeclarePairedDelimiter\norm{\lVert}{\rVert}
\newcommand{\jumpF}[1]{\llbracket #1 \rrbracket_F}
\newcommand{\triple}[1]{|\!|\!|#1|\!|\!|_S}
\begin{document}

\title{{Optimal approximation of unique continuation}
\thanks{
E.B.: supported by the EPSRC grants EP/T033126/1 and EP/V050400/1.\\
M. N.: this work was supported by the project "The Development of Advanced and Applicative Research Competencies in the Logic of STEAM + Health” /POCU/993/6/13/153310, project co-financed by the European Social Fund through The Romanian Operational Programme Human Capital 2014-2020.\\
L. O.: Co-funded by the European Union (ERC, LoCal, 101086697). Views and opinions expressed are however those of the authors only and do not necessarily reflect those of the European Union or the European Research Council. Neither the European Union nor the granting authority can be held responsible for them.
Co-funded by the Research Council of Finland (347715, 3530969).
}
}


\author{Erik Burman$^{1}$ \and Mihai Nechita$^{2,3}$ \and Lauri Oksanen$^{4}$}

\authorrunning{Erik Burman et al} 

\institute{E. Burman \at
              $^{1}$Department of Mathematics, University College London, London WC1E 6BT, UK\\
              \email{e.burman@ucl.ac.uk}           
           \and
           M. Nechita \at
              $^{2}$Tiberiu Popoviciu Institute of Numerical Analysis, Romanian Academy\\
              $^{3}$Department of Mathematics, Babeș-Bolyai University, Cluj-Napoca, Romania\\
              \email{mihai.nechita@ictp.acad.ro}
           \and 
           L. Oksanen \at
           $^{4}$Department of Mathematics and Statistics, University of Helsinki,
           P.O. 68, 00014 University of Helsinki, Finland\\
           \email{lauri.oksanen@helsinki.fi}
}

\date{Received: date / Accepted: date}

\maketitle

\begin{abstract}
We consider numerical approximations of ill-posed elliptic problems with conditional stability.
The notion of {\emph{optimal error estimates}} is defined including both convergence with respect to discretization and perturbations in data.
The rate of convergence is determined by the conditional stability of the underlying continuous problem and the polynomial order of the approximation space.
A proof is given that no approximation can converge at a better rate than that given by the definition without increasing the sensitivity to perturbations, thus justifying the concept.
A recently introduced class of primal-dual finite element methods with weakly consistent regularisation is recalled and the associated error estimates are shown to be optimal in the sense of this definition. 
\keywords{unique continuation \and conditional stability \and finite element methods \and stabilised methods \and error estimates \and optimality  \and optimal convergence}
\subclass{65N20}
\end{abstract}

\section{Introduction}
\label{intro}
Arguably one of the most fundamental results in finite element analysis is the best approximation result for the Galerkin method, known as Cea's Lemma \cite{Cea64}, which together with approximation estimates for finite element functions results in quasi-optimal error estimates for finite element methods \cite{Zlamal68}, \cite{Nit70}, \cite{Bab70}, \cite{Hel71}.
This result, that we will review below, essentially says that if a $(2m)$-order elliptic problem, $m\ge 1$, is approximated with $H^m$-conforming finite elements of local polynomial order $p$ the error in $H^m$-norm is of the order $h^{p+1-m}$ for a sufficiently smooth solution and that this rate is optimal compared to approximation: the best interpolant of the exact solution has similar accuracy.

For ill-posed elliptic problems the situation is different.
On the continuous level existence can only be guaranteed after regularisation of the problem.
The two main approaches are Tikhonov regularisation \cite{TA77} and quasi-reversibility \cite{LL69}.
These two approaches are strongly related (see for instance \cite{BR18}).
The main effort in the error analysis has been to estimate the perturbation induced by the addition of regularisation, and how to choose the associated regularisation operator or parameter \cite{Miller73}, \cite{Nat84}, \cite{Lu88}, \cite{Bour05}, \cite{IJ15}.
The error due to approximation in finite dimensional spaces of such regularised problems has also been analysed \cite{Nat77}, \cite{EHN88}, \cite{MP01}.

There is also a rich literature on projection methods for ill-posed problems where the discretisation serves as regularisation and refinement has to stop as soon as the effect of perturbations in data becomes dominant \cite{Natt77}, \cite{Engl83}, \cite{EN87}, \cite{HAG02}, \cite{Kalt00}.
These methods are often based on least squares methods and the convergence of the approximate solution to the exact solution for unperturbed data has been proven in several works.
There are also different stopping criteria for mesh refinement in order to avoid degeneration due to pollution from perturbations.
However no results on rates of convergence where the discretisation errors and the perturbation errors are both included appear in these references.

The use of \emph{conditional stability} (continuous dependence on data under the assumption of a certain a priori bound) to obtain more complete error estimates has been proposed in \cite{Bu13}, \cite{Bu14b}, \cite{Bu16}, \cite{BO18} for a class of finite element methods based on weakly consistent regularisation/stabilisation in a primal-dual framework.
Here stability is obtained through a combination of consistent stabilisation and Tikhonov regularisation, scaled with the mesh parameter to obtain weak consistency.
The upshot is that for this class of methods an error analysis exists, where the computational error is bounded in terms of the mesh parameter and perturbations of data, with constants depending on Sobolev norms of the exact solution.
Similarly to the well-posed case, the error estimates for this approach combine the stability of the physical problem with the numerical stability of the computational method and the approximability of the finite element space.
Contrary to the well-posed case, numerical stability can not be deduced from the physical stability, but has to be a consequence of the design of the stabilisation terms.
This means that the stabilisation in this framework is bespoke, and must be designed to combine optimal (weak) consistency and sufficient numerical stability.
There is often a tension between these two design criteria. As noted above, sometimes Tikhonov regularisation, scaled with the mesh parameter, may be used in the framework.
An interesting feature is that the bespoke character also allows for the integration of the dependence of the estimates on physical parameters and different problems regimes \cite{BNO19}, \cite{BNO20a}.
Other physical models that have been considered in this framework include data assimilation for fluids
\cite{BO18},\cite{BH18},\cite{BBFV20}, or wave equations \cite{BFO20a}.
Common for all these references is the fact that the error estimates reflect the stability of the continuous problem and the approximation order of the finite element space, which seems to be an optimality property of the methods. No rigorous proof, however, has been given for this optimality.
The objective of the present work is to show, in the model case of unique continuation for Laplace equation, that the proposed error estimates are indeed optimal.

For ill-posed PDEs that are conditionally stable, error estimates in terms of the modulus of continuity in the conditional stability, the consistency error and the best approximation error have also been obtained in \cite{DMS23}.
Based on least squares with the norms and the regularisation term dictated by the conditional stability estimate, this variation of quasi-reversibility relies on working with discrete dual norms and constructing Fortin projectors.
By choosing the regularisation parameter in terms of the consistency error and the best approximation error, the obtained error bound reflects the conditional stability estimate (qualitatively optimal).
Conditional stability estimates have also been used to obtain some bounds on the generalization error for physics-informed neural networks solving ill-posed PDEs \cite{MM22}. 
The question of optimality for both these kind of methods is included in our discussion.

Another well-known ill-posed problem is analytic continuation, which, similarly to unique continuation, possesses conditional stability  under the assumption of an a priori bound.
We will not discuss this problem here; for its conditional stability/conditioning and numerical approximations, we refer the reader to \cite{Trefethen20}, \cite{Trefethen23} and the references therein.

\subsection{Unique continuation problem}

Let $0 < r_1 < r_2 < R$ and write $\omega := B(r_1)$ and $B := B(r_2)$ where $B(r)$ is the open ball of radius $r > 0$, with the centre at the origin in $\R^n$.
The objective is to solve the continuation problem: given the restriction $u|_\omega$ to the subset $\omega$, find the restriction $u|_B$ when $u$ satisfies $\Delta u = 0$ in $B(R)$.

Further, letting $r_2 < r_3 < R$ and writing $\Omega := B(r_3)$, it is classical \cite{Fritz60} that the following conditional stability estimate holds:
\begin{equation}\label{eq:3sphere}
\|u\|_{L^2(B)} \lesssim \|u\|_{L^2(\omega)}^\alpha \|u\|_{L^2(\Omega)}^{(1-\alpha)},
\end{equation}
where $\alpha \in (0,1)$ and the implicit constant do not depend on the harmonic function $u$.
Estimate \eqref{eq:3sphere} is often called a three-ball inequality and in this case the constants can be given explicitly, see Theorem \ref{thm:opt_alpha} below.
We may view the unique continuation problem as finding $u \in H^1(\Omega)$ such that
\begin{equation}\label{eq:UC}
\left\{\begin{array}{rcl}
-\Delta u &=& 0 \mbox{ in } \Omega, \\
u\vert_\omega &=& q \mbox{ in } \omega,
\end{array} \right.
\end{equation}
with a priori knowledge on the size of the solution in $\Omega$ as prescribed by the $L^2(\Omega)$-norm in \eqref{eq:3sphere}.  

\subsection{Motivation and outline}
The motivation of this paper comes from error estimates obtained for primal-dual finite element methods applied to this problem, with perturbed data $u\vert_\omega =q + \delta q$, of the form
\begin{equation}\label{eq:canonical_estimate}
\|u - u_h\|_{L^2(B)} \lesssim h^\alpha \|u\|_{H^2(\Omega)} + h^{\alpha - 1} \|\delta q\|_{L^2(\omega)},
\end{equation}
which have been shown in \cite{BHL18}, \cite{BNO19}, \cite{BNO20a} in different variations of the second order elliptic equation.
Here $\alpha \in (0,1)$ is the exponent in \eqref{eq:3sphere} and $h>0$ denotes the mesh parameter defining the characteristic length scale of the finite dimensional space.
This is in the case of piecewise affine approximation, however the estimate generalises in a natural way to higher order approximation, as we shall see below.
One can obtain a similar bound in the $H^1$-norm over $B$. 
In the counterfactual case that $\alpha = 1$ one would then recover an estimate that is optimal compared to interpolation.
Hence a natural question is if the bound \eqref{eq:canonical_estimate} in the $L^2$-norm can be improved upon, since it is suboptimal with one order in $h$ when compared to interpolation in the case $\alpha =1$.

We show in this paper that if the coefficient $\alpha$ in \eqref{eq:3sphere} is optimal and depends continuously on $r_3$ (Theorem \ref{thm:opt_alpha}),
then regardless of the underlying method, no sequence of approximations to \eqref{eq:UC} can converge with a rate better than that given by \eqref{eq:canonical_estimate} (Theorem \ref{thm:optimal}) without increasing the sensitivity to perturbations.
We also point out that although the discussion focuses on the finite element method, the definition of optimal convergence given and the proof of optimality hold for any method producing an approximating sequence in $H^1$ (or relying on such a sequence for the analysis).

The paper is organised as follows.
In Section \ref{sec:rev_FEM} we will discuss the notion of optimality of finite element approximations.
First we revisit the classical finite element analysis for well-posed problems.
In Section \ref{sec:ill_posed_opt} we then discuss how the ideas of the well-posed case translate to the ill-posed case.
This leads us to a definition of optimal approximation for the problem \eqref{eq:UC} and we prove in Section \ref{sec:optimal} that no approximation method can converge in a better way than that given by this definition.
Finally in Section \ref{sec:FEM} we show that optimality can indeed be attained by presenting a finite element method with optimal error estimates which extend \eqref{eq:canonical_estimate} for higher order approximations.

\section{Optimal error estimates for elliptic problems}\label{sec:rev_FEM}
In this section we will first briefly recall the theory of optimal error estimates for Galerkin approximations of well-posed second order elliptic problems.
We then consider the ill-posed model problem \eqref{eq:UC} and discuss how the construction that led to optimal approximation in the well-posed case can be adapted to this situation.
This leads us to a definition of optimality of approximate solutions in the ill-posed case. We let $V:=H^1(\Omega)$ and $V_0:=H^1_0(\Omega)$.

For simplicity we consider the Poisson problem for $f \in V_0'$:
\begin{equation}\label{eq:Poisson}
\left\{\begin{array}{rcl}
-\Delta u & = & f \mbox{ in } \Omega \\
u & = & 0 \mbox{ on } \partial \Omega.
\end{array} \right.
\end{equation}
We define the associated weak formulation by: find $u \in V_0$ such that
\begin{equation}\label{eq:weak}
a(u,v) = \ell(v) \quad \forall v \in V_0,
\end{equation}
where $a(u,v):= \int_\Omega \nabla u \cdot \nabla v ~\mbox{d}x$ and $\ell(v) := \langle f,v\rangle_{V'_0,V_0}$.
It is well known that $a$ is a coercive ($V$-elliptic), continuous bilinear form on $V_0 \times V_0$, and $\ell$ is bounded and continuous on $V_0$. 

It then follows from Lax-Milgram's lemma \cite{LM54} that the weak formulation admits a unique solution satisfying the stability estimate
\begin{equation}\label{eq:stab}
\|u\|_V \leq \|\ell\|_{{V_0'}}
\end{equation}
where $\|\cdot\|_V := \sqrt{a(\cdot,\cdot)}$ and the dual norm is defined by
\[
\|\ell\|_{{V_0'}}:= \sup_{v \in V_0 \setminus \{0\}} \frac{|\ell(v)|}{\|v\|_V}.
\]
Introducing a finite dimensional subspace $V_{0h} \subset V_0$ we define the Galerkin method, find $u_h \in V_{0h}$ such that
\begin{equation}\label{eq:Galerkin}
a(u_h,v_h) = \ell_\delta(v_h) \quad \forall v_h \in V_{0h},
\end{equation}
where $\ell_\delta$ denotes a perturbed right hand side
$\ell_\delta(v) := \langle f + \delta f,v\rangle_{V_0',V_0}$,
with $\delta f \in {V_0'}$ and we assume $\|\delta f\|_{{V_0'}}$ to be known.
The associated linear system is invertible since $a$ is coercive.

Let $\bar{u}_h\in V_{0 h}$ be the solution for the unperturbed right hand side, satisfying
$$a\left(\bar{u}_h, v_h\right)=\ell\left(v_h\right)\quad \forall v_h \in V_{0 h}.$$
Then we have that $\left\|u-\bar{u}_h\right\|_V=\inf _{w_h \in V_{0 h}}\left\|u-w_h\right\|_V$, by Galerkin orthogonality,
and that $\left\|u_h-\bar{u}_h\right\|_V \leq\|\delta f\|_{V_0^{\prime}}$.
Since $\Delta: V_0 \rightarrow V_0^{\prime}$ is an isomorphic isometry, an application of the triangle-inequality for the approximation error $e := u- u_h$ gives that
\begin{equation}\label{eq:error_bound}
	\|e\|_V \leq \inf_{w_h \in V_{0h}} \|\Delta (u - w_h)\|_{{V_0'}}+
	\|\delta f\|_{{V_0'}}.
\end{equation}
This is equivalent to the classical result of Cea's lemma, but written in a form suitable for our purposes.
If $u\in H^{k+1}(\Omega)$ and $V_{0h}$ is the space of $H^1$-conforming piecewise polynomial finite elements of order $k$ we immediately have by approximation that
\begin{equation}\label{eq:error_bound_highorder}
	\|e\|_V \lesssim h^k |u|_{H^{k+1}(\Omega)} + \|\delta f\|_{{V_0'}},
\end{equation}
where $|\cdot|_{H^{k+1}(\Omega)}$ stands for the seminorm.
Observe how the Lipschitz stability of \eqref{eq:stab} combines with the approximation properties of the finite element space to yield an optimal error estimate.
Perturbations in data lead to stagnation of the error at the level of the perturbation.
\subsection{Optimal three-ball estimate}
Three-ball estimates such as \eqref{eq:3sphere} for solutions of second-order elliptic equations are well-known in the literature, see e.g. the review \cite{ARRV09} or \cite{Bru95}.
However, such results typically contain constants that depend implicitly on the geometry and the coefficients of the differential operator, and whose optimality is not clear \cite{Bru95}.
We aim here to give a result in the case of the Laplace operator which, barring optimality, is a variation of existing results in the literature, see \cite[Theorem 1]{Miller62} and \cite[Eq. (1.2)]{kuusi2021}.
We will consider only the two and three dimensional cases, for which we prove the following three-ball estimate in $L^2$-norms with optimal explicit constants.
\begin{theorem}\label{thm:opt_alpha}
Let $n\in\{2,3\}$ and $B(r) \subset \R^n$ be the open ball of radius $r > 0$.
Let $0 < r_1 < r_2 < r_3$. 
Then for all harmonic functions $u$ there holds
\begin{equation}\label{eq:3sphere_sharp}
\|u\|_{L^2(B(r_2))} \le \|u\|_{L^2(B(r_1))}^\alpha \|u\|_{L^2(B(r_3))}^{(1-\alpha)},
\end{equation}
where
    \begin{align}\label{optimal_alpha}
\alpha := \frac{\beta}{1+\beta} = \frac{\log(r_3) - \log(r_2)}{\log(r_3) - \log(r_1)}, 
\quad 
\beta := \frac{\log(r_3) - \log(r_2)}{\log(r_2) - \log(r_1)}.
    \end{align}
Moreover, there does not exist $\tilde \alpha > \alpha$ such that 
\begin{equation}\label{eq:3sphere_toogood}
\|u\|_{L^2(B(r_2))} \lesssim \|u\|_{L^2(B(r_1))}^{\tilde \alpha} \|u\|_{L^2(B(r_3))}^{(1-\tilde \alpha)}.
\end{equation}
\end{theorem}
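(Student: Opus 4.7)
The plan is to prove both inequalities by expanding harmonic functions in spherical harmonics (or Fourier series in angle, in dimension two), and reducing the three-ball estimate to log-convexity of a suitable series.

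First I would expand a harmonic function $u$ in $B(r_3)$ as $u(s\theta) = \sum_{k \ge 0} s^k Y_k(\theta)$, where the $Y_k$ are spherical harmonics of degree $k$ on $S^{n-1}$ (for $n=2$ these are $\cos(k\theta), \sin(k\theta)$, and for $n=3$ they are the usual degree-$k$ spherical harmonics). Using orthogonality on the sphere and switching to polar coordinates gives
\begin{equation*}
F(r) := \|u\|_{L^2(B(r))}^2 = \sum_{k \ge 0} \frac{a_k}{2k+n}\, r^{2k+n}, \qquad a_k := \|Y_k\|_{L^2(S^{n-1})}^2.
\end{equation*}
The key observation is then that, as a function of the variable $t = \log r$, each term equals $\frac{a_k}{2k+n} e^{(2k+n)t}$, which is log-linear in $t$ and in particular log-convex. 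Since the class of log-convex functions is closed under pointwise addition, $F$ is log-convex in $\log r$. Writing $\log r_2 = \alpha \log r_1 + (1-\alpha)\log r_3$ with $\alpha$ as in \eqref{optimal_alpha} and applying the definition of convexity yields
\begin{equation*}
F(r_2) \le F(r_1)^{\alpha} F(r_3)^{1-\alpha},
\end{equation*}
which is the square of \eqref{eq:3sphere_sharp}.

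For the sharpness claim, I would test \eqref{eq:3sphere_toogood} on the single harmonic $u_k(x) = |x|^k Y_k(x/|x|)$, for which the computation above gives $\|u_k\|_{L^2(B(r))} = c_k\, r^{k+n/2}$ with $c_k > 0$ independent of $r$. Plugging into \eqref{eq:3sphere_toogood} and cancelling constants, the inequality becomes
\begin{equation*}
r_2^{k+n/2} \le C\, r_1^{\tilde\alpha(k+n/2)}\, r_3^{(1-\tilde\alpha)(k+n/2)}
\end{equation*}
for some $C$ independent of $k$. Taking logarithms, dividing by $k+n/2$, and letting $k \to \infty$ forces
\begin{equation*}
\tilde\alpha \log(r_2/r_1) \le (1-\tilde\alpha) \log(r_3/r_2),
\end{equation*}
i.e.\ $\tilde\alpha \le \beta/(1+\beta) = \alpha$, contradicting $\tilde\alpha > \alpha$.

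The only genuinely nonroutine step is verifying that $F$ is log-convex in $\log r$; the rest is bookkeeping with spherical-harmonic expansions. For log-convexity I would rely on the standard fact that a sum of log-convex functions is log-convex (either via the Cauchy–Schwarz characterisation $F(t)^2 \le F(t-h)F(t+h)$, which passes through sums by Cauchy–Schwarz applied termwise, or directly by checking that $(\log F)'' \ge 0$ via differentiation under the sum). Everything else — orthogonality of spherical harmonics on each sphere of radius $s$, the resulting diagonal form of $F(r)$, and the test-function argument for optimality — is immediate.
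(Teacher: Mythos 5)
Your proof is correct, but it reaches the first inequality by a genuinely different route than the paper. The paper does not expand in spherical harmonics: it quotes an existing symmetric three-ball inequality
$\|u\|_{L^2(B_r)} \le \|u\|_{L^2(B_{\theta r})}^{1/2}\,\|u\|_{L^2(B_{\theta^{-1}r})}^{1/2}$
as a black box, takes logarithms to get midpoint convexity of $f(t)=\log\|u\|_{L^2(B(e^t))}$, and then chooses the radii so that convexity delivers the exponent $\alpha$ of \eqref{optimal_alpha}. You instead prove the same log-convexity in $\log r$ directly, from the orthogonal decomposition $F(r)=\sum_k \frac{a_k}{2k+n}r^{2k+n}$ and closure of log-convexity under sums; this makes the argument self-contained (no external citation needed), extends verbatim to any dimension $n\ge 2$ rather than only $n\in\{2,3\}$, and has the pleasant side effect that the norm computation for your extremal functions is just a one-term instance of the same formula. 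What the paper's route buys is brevity and the avoidance of the (routine but real) bookkeeping you gesture at: justifying the expansion, the termwise integration, and the sum-of-log-convex-functions step, plus the degenerate case $\|u\|_{L^2(B(r_3))}=\infty$, where the inequality holds trivially. For the optimality claim your argument is essentially the paper's: the paper tests with $z^{n-1}$ in 2D and with $(x^1+ix^2)^{n-1}$ in 3D, which are exactly members of your family $|x|^kY_k(x/|x|)$, and both arguments exploit that the constant in \eqref{eq:3sphere_toogood} is uniform in the harmonic function before letting the degree tend to infinity; your general spherical-harmonic phrasing just makes the cancellation of the constants $c_k$ more transparent.
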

\begin{proof}
For any $r > 0$ and $\theta \in (0,1)$ there holds 
\begin{align} 
\label{basic_threeballs}
\left\| u \right\|_{L^2(B_{r} )} 
\leq 
\left\| u \right\|_{L^2(B_{\theta r})}^{1/2} 
\left\| u \right\|_{L^2(B_{\theta^{-1} r} )}^{1/2}, 
\end{align}
see e.g. \cite[Eq. (1.2)]{kuusi2021}. We aim to transform this estimate into \eqref{eq:3sphere_sharp}. 
We take the logarithm of (\ref{basic_threeballs}) and write $f(t) = \log \left\| u \right\|_{L^2(B(\exp(t)) )}$ to obtain 
\begin{align*} 
f(\log(r)) \leq \frac12 f(\log(\theta r)) + \frac12 f(\log(\theta^{-1} r)).
\end{align*}
Notice that $\log(\theta r) + \log(\theta^{-1} r) = 2 \log (r)$, so that writing $t = \log(\theta r)$ and $s = \log(\theta^{-1} r)$, we obtain that 
\begin{align}\label{f_convex}
f(\tfrac12 (s + t )) \leq \tfrac12 ( f(s) + f(t)) ,
\end{align}
yielding convexity of $f$. Hence, for every $\alpha \in (0,1)$ and $s,t \in \R$  
\begin{align}\label{f_convex2}
f(\alpha s + (1-\alpha) t ) \leq  \alpha f(s) + (1-\alpha) f(t).
\end{align}

We now set $r = r_2$ and $\theta = r_1 / r_2$. Then $r_1 = \theta r$ and $r_3 = \theta^{-\beta} r$, with $\beta$ given by \eqref{optimal_alpha}.
Taking $s = \log (\theta r)$ and $t = \log (\theta^{-\beta} r)$ there holds
\begin{align*} 
\alpha s + (1-\alpha) t 
& =  
\alpha  \log \theta + \alpha  \log r  
- 
(1-\alpha) \beta \log \theta  + (1-\alpha) \log r
\\ & 
=
(\alpha - (1-\alpha) \beta ) \log \theta + \log r = \log r,
\end{align*}
since $(1-\alpha) \beta = \alpha$. With this choice, taking the exponential of \eqref{f_convex2} gives \eqref{eq:3sphere_sharp}.

Suppose now that \eqref{eq:3sphere_toogood} holds for some $\tilde \alpha > 0$.
We will show that $\tilde \alpha \le \alpha$.
Let us consider first the two dimensional case. 
Identifying $\R^2$ with $\C$, consider the function $u(z) = z^{n-1}$ which is harmonic for $n \in \N$.
The following argument is similarly valid for its real part. 
Using polar coordinates we have that, for $\rho>0$,
\begin{align}\label{harmonic_pow}
\norm{u}_{L^2(B(\rho))}^2 = 2 \pi \int_0^\rho r^{2(n-1) + 1} ~\mbox{d}r = 
c_n \rho^{2n}, \quad c_n = \frac{\pi}{n}.
\end{align}
Notice that we have equality in \eqref{eq:3sphere_sharp} for $\alpha = \beta/(1+\beta) = \frac{\log(r_3) - \log(r_2)}{\log(r_3) - \log(r_1)}$ .

Recalling that $r_1/r_2 = \theta,\, r_3/r_2 = \theta^{-\beta}$, estimate \eqref{eq:3sphere_toogood} reads as
\begin{align}
1 \lesssim \theta^{n (\tilde \alpha - \beta (1-\tilde \alpha))} .
\end{align}
As $n \in \N$ is arbitrary and $\theta \in (0,1)$ we must have 
$\tilde \alpha - \beta (1-\tilde \alpha) \le 0$.
In other words,
\begin{align*}
\tilde \alpha \le \frac{\beta}{1+\beta} = \alpha. 
\end{align*}

We turn to the three dimensional case, and consider the function 
\begin{align}\label{u_3d}
u(x^1, x^2, x^3) = z^{n-1}, \quad z = x^1 + i x^2.
\end{align}
As above, this is harmonic for $n \in \N$.
Passing to spherical coordinates there holds
\begin{align}\label{harmonic_pow_3d}
\norm{u}_{L^2(B(\rho))}^2 
= 2 \pi \int_0^\pi \int_0^\rho (r\sin \theta)^{2(n-1) + 1} r ~\mbox{d}r \mbox{d}\theta = 
c_n \rho^{2n + 1}, 
\end{align}
where the constant $c_n$ can be written using the Gamma function
\begin{align*}
c_n =  
\frac{2 \pi^{3/2} \Gamma(n)}{(2n + 1)\Gamma(n + 1/2)}.    
\end{align*}
The conclusion follows as in the two dimensional case.
\qed
\end{proof}
Note that the same explicit constants as in Theorem \ref{thm:opt_alpha} appear in Hadamard's three-circle theorem (in $L^{\infty}$-norms) for holomorphic functions.
\begin{remark}
In Theorem \ref{thm:opt_alpha} we proved the optimality and continuous dependence of the exponent $\alpha$ for unique continuation subject to the Laplace equation.
In a more general setting, a discussion of optimality of three-ball inequalities can be found in \cite{EFV06} for elliptic and parabolic problems.
In \cite{LNW10,LUW10} some cases in fluid mechanics and elasticity are considered for which optimality is claimed. 
\end{remark}

\subsection{Definition of optimal convergence for ill-posed problems with conditional stability}\label{sec:ill_posed_opt}
In this section we will try to mimic the development in the well-posed case for the problem \eqref{eq:UC} and point out where things go wrong.
We will do this with minimal reference to a particular approximation method to keep the discussion general.
However, in Section \ref{sec:FEM} we introduce a method for which the programme can be carried out.

First we will derive a weak formulation.
This time, since no boundary conditions are set on $u$, we must consider the trial space $V$.
To make form $a$ consistent with the problem, the test space must be chosen to be $V_0$, as keeping $V$ would imply a homogeneous Neumann condition on the boundary.
We may then write a weak formulation of the problem \eqref{eq:UC}, find $u \in V$ such that
$u\vert_{\omega} = q$ and 
\[
a(u,v) = 0 \quad \forall v \in V_0.
\]
We know that the exact solution satisfies this formulation and that \eqref{eq:3sphere} holds.
Assume now that we have an approximation $u_h \in V_h$ obtained using
the perturbed data $\tilde q  := q + \delta q$, where $\delta q \in L^2(\omega)$.
Observe that although for this data, most likely, no exact solution will exist, a discrete approximation of the unperturbed exact solution $u$ may still be constructed.
Similarly as before the error $e:=u-u_h$ satisfies
\[
a(e,v) = \ell_h(v) \quad \forall v \in V_0,
\]
where 
\[
\ell_h(v) := -a(u_h, v), \, \mbox{ with } \|\ell_h\|_{{V_0'}} = \|\Delta u_h\|_{{V_0'}}.
\]
Observe that even if $u_h$ is produced using a Galerkin procedure we can not use here the same techniques as when proving \eqref{eq:error_bound}, since the trial space $V$ in this case is bigger than the test space $V_0$.

As before we would now like to apply a stability estimate, this time \eqref{eq:3sphere}, using the right hand side on the perturbation. However this is not possible, since there is no right hand side in \eqref{eq:UC} and \eqref{eq:3sphere}.
Instead we first decompose $e = e_0 + \tilde e$, where $e_0 \in V_0$ solves the well-posed problem
\[
a(e_0,v) = \ell_h(v) \quad \forall v \in V_0,
\]
and $\tilde e$ solves \eqref{eq:UC} with $\tilde e\vert_{\omega} = (e - e_0)\vert_{\omega}$.
Using the triangle inequality and then applying \eqref{eq:stab} to $e_0$ and \eqref{eq:3sphere} to $\tilde e$ we arrive at
\[
\|e\|_{L^2(B)} \leq \|e_0\|_{V} + \|\tilde e\|_{L^2(B)} \lesssim \|\Delta
u_h\|_{{V_0'}} +\|\tilde e\|^{\alpha}_{L^2(\omega)} \|\tilde e\|^{1-\alpha}_{L^2(\Omega)}.
\]
Using once again the triangle inequality this leads to
\[
\|\tilde e\|^{\alpha}_{L^2(\omega)} \|\tilde
e\|^{1-\alpha}_{L^2(\Omega)} \lesssim (\|e\|_{L^2(\omega)} + \|\Delta
u_h\|_{{V_0'}})^{\alpha} (\|e\|_{L^2(\Omega)} + \|\Delta
u_h\|_{{V_0'}})^{1-\alpha}.
\]
We conclude that any approximation $u_h$ must satisfy the bound
\begin{equation}
	\label{eq:protobound}
	\begin{aligned}
		\|e\|_{L^2(B)} &\lesssim \|\Delta
		u_h\|_{{V_0'}} +(\|e\|_{L^2(\omega)} + \|\Delta
		u_h\|_{{V_0'}})^{\alpha} (\|e\|_{L^2(\Omega)} + \|\Delta
		u_h\|_{{V_0'}})^{1-\alpha} \\
		&\lesssim (\|q- u_h\|_{L^2(\omega)} + \|\Delta
		u_h\|_{{V_0'}})^{\alpha} (\|e\|_{L^2(\Omega)} + \|\Delta
		u_h\|_{{V_0'}})^{1-\alpha}.
	\end{aligned}
\end{equation}
If we assume that the term $\|e\|_{L^2(\Omega)}$ is bounded, then inequality \eqref{eq:protobound} gives an a posteriori bound for the
error on $B$ in the $L^2(B)$-norm.

For the sake of discussion, we will, for a moment, consider an approximation $u_h$ satisfying certain properties.
These properties can be thought of as design criteria for the numerical method, since as it turns out they lead to optimal convergence.
In Section \ref{sec:FEM} we construct a finite element method with these properties.

\begin{enumerate}
\item Bound on the equation residual:
\begin{equation}\label{eq:res}
\|\Delta u_h\|_{{V_0'}} \lesssim  C h^k|u|_{H^{k+1}(\Omega)} + \|\delta
q\|_{L^2(\omega)}.
\end{equation}
Observe that this means that the residual convergence in the ill-posed case is as good as the residual convergence in the well-posed case, see \eqref{eq:error_bound_highorder}.
\item Bound on the data fitting term:
\begin{equation}\label{eq:data}
\|q- u_h\|_{L^2(\omega)} \lesssim C h^k|u|_{H^{k+1}(\Omega)}+ \|\delta
q\|_{L^2(\omega)}
\end{equation}
This term is suboptimal by one order in $h$ compared to interpolation, but nothing can be gained by assuming better convergence since the
term always is dominated by the contribution from $\|\Delta u_h\|_{{V_0'}}$ in the bound \eqref{eq:protobound}.
Strengthening the norm on $\omega$ on the other hand is possible provided the perturbation $\delta q$ also has additional smoothness.
\item Finally we need to assume an a priori bound on $u_h$:
\begin{equation}\label{eq:apriori}
\|u_h\|_{L^2(\Omega)} \lesssim |u|_{H^{k+1}(\Omega)} +h^{-k} \|\delta q\|_{L^2(\omega)}.
\end{equation}
The rationale for this choice is that it is the strongest control that can be achieved through Tikhonov regularisation without affecting the convergence order when the data is unperturbed, assuming that the previous two assumptions hold.
\end{enumerate}
Injecting these three bounds in \eqref{eq:protobound} we get the error estimate
\begin{equation}\label{eq:optimal_conv}
\|u-u_h\|_{L^2(B)} \lesssim h^{\alpha k} \|u\|_{H^{k+1}(\Omega)}
+ h^{-(1-\alpha) k} \|\delta q\|_{L^2(\omega)}.
\end{equation}

A generic version of this estimate can be obtained by decoupling the rate of convergence from the sensitivity to perturbations, by considering the following bound
\begin{align}\label{eq:optimal_def}
	\norm{u-u_h}_{L^2(B)} 
	\lesssim 
	h^{\alpha_1 k} \norm{u}_{H^{k+1}(\Omega)} 
	+ h^{(\alpha_2 - 1) k} \norm{\delta q}_{L^2(\omega)},
\end{align}
for $\alpha_1,\, \alpha_2\in(0,1)$.
Denoting the upper bound here by $E(h):=h^{\alpha_1 k} \norm{u}_{H^{k+1}(\Omega)} + h^{(\alpha_2 - 1) k} \norm{\delta q}_{L^2(\omega)}$, 
we see that $E$ has a unique critical point $h_{min}$ on $(0,\infty)$, which is a minimum since $E''(h_{min}) > 0$. Hence we get that
$$
\norm{u-u_{h_{min}}}_{L^2(B)}
\lesssim
\|u\|_{H^{k+1}(\Omega)}^{1-\tilde{\alpha}} \|\delta q\|_{L^2(\omega)}^{\tilde{\alpha}},
$$
with $\tilde \alpha := \frac{\alpha_1}{1+\alpha_1 - \alpha_2}$.
Notice that $\tilde{\alpha} = \alpha$ when $\alpha_1 = \alpha_2 = \alpha$.
Considering convergence with respect to perturbations $\delta q$ in this bound, one would like to have $\tilde{\alpha}$ as large as possible. 

Based on the discussion above we here propose a definition of what it means that a family of approximations $\{u_h\}$
to an ill-posed problem of the form \eqref{eq:UC} is optimally convergent.
\begin{definition}\label{def:optimal}
Assume that $u\in H^{k+1}(\Omega),\, k\in \N$, solves the unique continuation problem \eqref{eq:UC}.
Let $\alpha\in(0,1)$ be the largest value for which the conditional stability estimate \eqref{eq:3sphere} holds.
Let $\{u_h\}_{h>0}$ be a family of functions in $H^1(\Omega)$.
If the family $\{u_h\}$ satisfies the inequality \eqref{eq:optimal_def} with $\frac{\alpha_1}{1+\alpha_1 - \alpha_2} = \alpha$, then we say that its convergence is optimal.
\end{definition}
\begin{remark}
An optimal $\alpha\in(0,1)$ in the stability estimate \eqref{eq:3sphere} is provided in Theorem \ref{thm:opt_alpha}.
We prove below that, independently of the method used, no family $\{u_h\} \subset H^1(\Omega)$ of approximations to the solution of \eqref{eq:UC} can satisfy \eqref{eq:optimal_def} with $\frac{\alpha_1}{1+\alpha_1 - \alpha_2} > \alpha$.
In particular, no method can exceed the convergence rate in \eqref{eq:optimal_conv} without increasing the sensitivity to data perturbations nor can it improve this sensitivity without decreasing the convergence rate, i.e. there exist no $\alpha_1, \alpha_2 \in [\alpha,1)$ with $\alpha_1 > \alpha$ or $\alpha_2 > \alpha$, such that 
\begin{align*}
	\norm{u-u_h}_{L^2(B)} 
	\lesssim 
	h^{\alpha_1 k} \norm{u}_{H^{k+1}(\Omega)} 
	+ h^{(\alpha_2 - 1) k} \norm{\delta q}_{L^2(\omega)}.
\end{align*}
\end{remark}

\begin{remark}	
The question of constructing such an optimal method with $\alpha_1 > \alpha$ is currently an open question.
\end{remark}

\subsection{Proof of optimality}\label{sec:optimal}
The following Caccioppoli-type inequality is known but we give a short proof for the convenience of the reader.
\begin{lemma}\label{lem:shiftnorm}
Let $r_3 < r_4 < R$ and $k \ge 0$. Then for all $w \in
H^{k+1}(B(R))$ satisfying $\Delta w = 0$ in $B(R)$ there holds
\[
\|w\|_{H^{k+1}(B(r_3))} \lesssim \|w\|_{L^2(B(r_4))}.
\]
\end{lemma}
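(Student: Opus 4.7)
The plan is to prove this by iterating the standard Caccioppoli inequality for harmonic functions, using the crucial fact that all partial derivatives of a harmonic function are again harmonic.

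First I would recall the basic Caccioppoli estimate: for any harmonic $v$ in $B(R)$ and any $\rho < \rho' < R$, there holds
\begin{equation*}
\|\nabla v\|_{L^2(B(\rho))} \lesssim \frac{1}{\rho' - \rho}\|v\|_{L^2(B(\rho'))}.
\end{equation*}
This follows by testing $\Delta v = 0$ against $\chi^2 v$, where $\chi \in C_c^\infty(B(\rho'))$ is a cutoff with $\chi \equiv 1$ on $B(\rho)$ and $|\nabla \chi| \lesssim (\rho'-\rho)^{-1}$; integration by parts gives $\int \chi^2|\nabla v|^2 = -2\int \chi v \nabla\chi\cdot\nabla v$, and Cauchy--Schwarz absorbs the gradient term.

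Next I would introduce a chain of intermediate radii $r_3 = \rho_0 < \rho_1 < \cdots < \rho_{k+1} = r_4$, equally spaced on $[r_3, r_4]$. Since $\Delta w = 0$ implies $\Delta(\partial^\beta w) = 0$ for every multi-index $\beta$, I can apply the basic Caccioppoli estimate to $v = \partial^\beta w$ on each shell $B(\rho_j) \subset B(\rho_{j+1})$. Inductively, for each $j \in \{0,\dots,k\}$ and each $|\beta| \le k-j$,
\begin{equation*}
\|\partial^\beta w\|_{H^{j+1}(B(\rho_j))} \lesssim \|\partial^\beta w\|_{H^{j}(B(\rho_{j+1}))},
\end{equation*}
with constants depending only on $r_3, r_4, k, n$. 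Summing over $|\beta| \le k$ at each step and telescoping across $j = 0,\dots,k$ yields
\begin{equation*}
\|w\|_{H^{k+1}(B(r_3))} \lesssim \|w\|_{L^2(B(r_4))},
\end{equation*}
which is the claim.

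There is no real obstacle here: the argument is entirely routine interior regularity for harmonic functions, and the main thing to keep straight is the bookkeeping of the finitely many cutoff constants, all of which depend only on $r_4 - r_3$ and $k$. If preferred, one could equivalently invoke standard interior $H^{k+1}$ estimates for the Laplace equation (e.g.\ as in Evans or Gilbarg--Trudinger), which package exactly this iteration.
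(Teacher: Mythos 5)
Your proposal is correct and follows essentially the same route as the paper: a chain of intermediate radii between $r_3$ and $r_4$, the observation that every partial derivative of a harmonic function is harmonic, and a one-derivative gain per step that telescopes to $\|w\|_{H^{k+1}(B(r_3))} \lesssim \|w\|_{L^2(B(r_4))}$. The only (cosmetic) difference is in the single-step estimate: you derive the $H^1$-from-$L^2$ bound by the classical Caccioppoli test-function argument with $\chi^2 v$, whereas the paper obtains it by writing $v=\chi y$, using the commutator $[\Delta,\chi]$ and the $H^{-1}\to H^1_0$ stability of the Dirichlet problem -- both are standard and equivalent in substance.
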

\begin{proof}
Divide the interval $(r_3,r_4)$ in $k+1$ subintervals $(R_j,R_{j+1})$ of equal length with  
$R_j=r_3 + j \delta R$, $R_0=r_3$, $R_{k+1} = r_4, \delta R = R_{j+1} - R_j = (r_4 - r_3)/(k+1)$. For an index $j=0,\hdots,k$ 
choose a $\chi \in C_0^\infty(B(R_{j+1}))$ such  that 
$\chi = 1$ in $B(R_j)$ and write $v = \chi y$, where $y \in
H^1(B(R_{j+1}))$ and $\Delta y = 0$.
Then, if  $[\Delta, \chi]$ denotes the commutator $\Delta \chi - \chi \Delta$, 
\begin{equation*}
\left\{
\begin{aligned}
	\Delta v &= [\Delta, \chi] y &\mbox{in }& B(R_{j+1}) \\
	v &= 0 &\mbox{on }& \partial B(R_{j+1}).
\end{aligned}
\right.
\end{equation*}
and therefore by \eqref{eq:stab} we have that
\begin{equation}\label{eq:H1toL2}
\begin{aligned}
\norm{y}_{H^1(B(R_j))} 
&\le
\norm{v}_{H^1(B(R_{j+1}))}
\le
\norm{\Delta v}_{H^{-1}(B(R_{j+1}))}\\
&\lesssim \norm{[\Delta, \chi] y}_{H^{-1}(B(R_{j+1})}
\lesssim \norm{y}_{L^2(B(R_{j+1}))}.
\end{aligned}   
\end{equation}
Here in the last step we used that
\begin{align*}
([\Delta, \chi] y,w)_{B(R_{j+1})} &= ((\Delta \chi) y + 2 \nabla \chi \cdot \nabla y,
w)_{B(R_{j+1})}\\
&= - ((\Delta \chi) y,w)_{B(R_{j+1})}-2 (y, \nabla \chi \cdot \nabla w)_{B(R_{j+1})} \\
&\lesssim
\norm{y}_{L^2(B(R_{j+1}))} \|w\|_{H^{1}(B(R_{j+1}))}.
\end{align*}
Let $y= D^{k-j} w$ where $D^{k-j}$ denotes an arbitrary partial derivative of order $k-j$, $j = 0,\hdots,k$.
Then $\Delta y = 0$. It follows from equation \eqref{eq:H1toL2} that
\[
\norm{y}_{H^1(B(R_j))} \lesssim \norm{y}_{L^2(B(R_{j+1}))}.
\]
By applying this to all partial derivatives of order $k-j$, we see that
\[
\norm{w}_{H^{k+1-j}(B(R_j))} \lesssim \norm{w}_{H^{k-j}(B(R_{j+1}))}.
\]
Hence by applying this inequality sequentially for $j = 0,\hdots,k$ we see
that
\[
\norm{w}_{H^{k+1}(B(r_3))} = \norm{w}_{H^{k+1}(B(R_0))}
\lesssim \hdots \lesssim \norm{w}_{H^{1}(B(R_{k}))} \lesssim \norm{w}_{L^2(B(r_4))}.
\]
This concludes the proof.
\qed
\end{proof}

\begin{theorem}\label{thm:optimal}
Let $0<r_1<r_2<r_3<R$ and let $\omega = B(r_1)$, $B = B(r_2)$, $\Omega = B(r_3)$.
Let $\alpha \in (0,1)$ be the optimal exponent in Theorem \ref{thm:opt_alpha}.
Let $u\in H^{k+1}(B(R))$ satisfy $\Delta u = 0$ in $B(R)$ and let $q = u|_\omega$.
Consider a family of mappings $\{F_h\}_{h>0},\, F_h : L^2(\omega) \to H^1(\Omega),\, F_h(q + \delta q) =: u_h $, for all $\delta q \in L^2(\omega)$.
Then there exist no $\alpha_1, \alpha_2 \in (0,1)$ with $\frac{\alpha_1}{1+\alpha_1 - \alpha_2} > \alpha$, such that 
\begin{align}\label{eq:too_good}
\norm{u-u_h}_{L^2(B)} 
\lesssim 
h^{\alpha_1 k} \norm{u}_{H^{k+1}(\Omega)} 
+ h^{(\alpha_2 - 1) k} \norm{\delta q}_{L^2(\omega)}.
\end{align}
In particular, there exist no $\alpha_1, \alpha_2 \in [\alpha,1)$ with $\alpha_1 > \alpha$ or $\alpha_2 > \alpha$ such that \eqref{eq:too_good} holds.
\end{theorem}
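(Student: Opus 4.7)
The plan is to argue by contradiction: suppose there exist $\alpha_1, \alpha_2 \in (0,1)$ with $\tilde{\alpha} := \frac{\alpha_1}{1+\alpha_1-\alpha_2} > \alpha$ such that \eqref{eq:too_good} holds for every harmonic $u$ in $B(R)$ and every perturbation $\delta q \in L^2(\omega)$. The goal is to extract from this a three-ball inequality whose exponent strictly exceeds the optimal one from Theorem \ref{thm:opt_alpha}, a contradiction. The key device is to apply \eqref{eq:too_good} twice to the same value of $F_h$ but with two different ground-truth harmonic functions. For any harmonic $w$ in $B(R)$, applying the bound with ground truth $u = 0$ and perturbation $\delta q = w|_\omega$ gives $\|F_h(w|_\omega)\|_{L^2(B)} \lesssim h^{(\alpha_2 - 1)k} \|w\|_{L^2(\omega)}$, while applying it with $u = w$ and $\delta q = 0$ gives $\|w - F_h(w|_\omega)\|_{L^2(B)} \lesssim h^{\alpha_1 k} \|w\|_{H^{k+1}(\Omega)}$. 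A triangle inequality then yields
\begin{equation*}
\|w\|_{L^2(B)} \lesssim h^{\alpha_1 k} \|w\|_{H^{k+1}(\Omega)} + h^{(\alpha_2 - 1)k} \|w\|_{L^2(\omega)}
\end{equation*}
for every harmonic $w$ in $B(R)$.

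Minimising the right-hand side over $h$ at $h_{min} \sim (\|w\|_{L^2(\omega)}/\|w\|_{H^{k+1}(\Omega)})^{1/((1+\alpha_1-\alpha_2)k)}$ produces $\|w\|_{L^2(B)} \lesssim \|w\|_{H^{k+1}(\Omega)}^{1-\tilde{\alpha}} \|w\|_{L^2(\omega)}^{\tilde{\alpha}}$. I would then fix any $r_4 \in (r_3, R)$ and apply Lemma \ref{lem:shiftnorm} to dominate $\|w\|_{H^{k+1}(\Omega)}$ by $\|w\|_{L^2(B(r_4))}$, reaching the pure three-ball inequality
\begin{equation*}
\|w\|_{L^2(B(r_2))} \lesssim \|w\|_{L^2(B(r_4))}^{1-\tilde{\alpha}} \|w\|_{L^2(B(r_1))}^{\tilde{\alpha}}.
\end{equation*}
The optimal exponent on the triple $(r_1, r_2, r_4)$ provided by Theorem \ref{thm:opt_alpha} is $\alpha^*(r_4) = \frac{\log r_4 - \log r_2}{\log r_4 - \log r_1}$, which is continuous in $r_4$ with $\alpha^*(r_3) = \alpha$. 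Since $\tilde{\alpha} > \alpha$, one can choose $r_4$ slightly larger than $r_3$ (still below $R$) so that $\tilde{\alpha} > \alpha^*(r_4)$, contradicting the optimality clause of Theorem \ref{thm:opt_alpha} applied to $(r_1, r_2, r_4)$. The ``in particular'' statement then follows because $\tilde{\alpha}$ is strictly increasing in each of $\alpha_1$ and $\alpha_2$, so $\alpha_1,\alpha_2 \in [\alpha, 1)$ with $\alpha_1 > \alpha$ or $\alpha_2 > \alpha$ forces $\tilde{\alpha} > \alpha$.

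The main obstacle is the double-application device that produces the two-parameter bound on arbitrary harmonic $w$; once that step is in hand, the rest is calibration against Theorem \ref{thm:opt_alpha}. A smaller subtlety is that the minimiser $h_{min}$ must be an admissible value of $h$ (for instance, small, if \eqref{eq:too_good} is only supposed to hold for $h$ small); this is guaranteed by testing the derived three-ball inequality against the harmonic monomials $w(z) = z^{n-1}$ already used in the proof of Theorem \ref{thm:opt_alpha}, for which the ratio $\|w\|_{L^2(\omega)}/\|w\|_{H^{k+1}(\Omega)}$ decays like $(r_1/r_4)^n \to 0$ as $n \to \infty$.
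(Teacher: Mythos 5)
Your proposal is correct and follows essentially the same route as the paper's proof: apply \eqref{eq:too_good} twice (once with $u=0$, $\delta q = w|_\omega$, once with $u=w$, $\delta q=0$), combine by the triangle inequality, equalise the two terms in $h$, weaken the $H^{k+1}$-norm via Lemma \ref{lem:shiftnorm}, and contradict the optimality of the exponent for the triple $(r_1,r_2,r_4)$ with $r_4$ close to $r_3$ by continuity. Your monotonicity argument for the ``in particular'' clause is a slightly cleaner packaging of the paper's two-case computation, and your remark about the admissibility of $h_{min}$ is an extra (unneeded here, since the bound is assumed for all $h>0$) but sensible precaution.
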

\begin{proof}
We give a proof by contradiction.
Assume that there exist $\alpha_1, \alpha_2 \in (0,1)$ with
$$\tilde \alpha := \frac{\alpha_1}{1+\alpha_1 - \alpha_2} > \alpha$$
such that \eqref{eq:too_good} holds.
Taking $u=0$ and $\delta q = \tilde u|_\omega$ for $\tilde u$ satisfying $\Delta \tilde u = 0$ in $B(R)$, the estimate \eqref{eq:too_good} reduces to
\begin{align*}
\norm{F_h(\tilde u|_\omega)}_{L^2(B)} 
\lesssim 
h^{(\alpha_2 - 1) k} \norm{\tilde u}_{L^2(\omega)}.
\end{align*}
Using \eqref{eq:too_good} again with $u = \tilde u$ and $\delta q = 0$, we get 
\begin{align*}
\norm{\tilde u - F_h(\tilde u|_\omega)}_{L^2(B)} 
\lesssim 
h^{\alpha_1 k} \norm{\tilde u}_{H^{k+1}(\Omega)}.
\end{align*}
Hence 
\begin{equation}\label{eq:back_to_cont}
\begin{aligned}
\norm{\tilde u}_{L^2(B)}
&\le 
\norm{\tilde u - F_h(\tilde u|_\omega)}_{L^2(B)}
+ \norm{F_h(\tilde u|_\omega)}_{L^2(B)} \\
&\lesssim 
h^{\alpha_1 k } \norm{\tilde u}_{H^{k+1}(\Omega)}
+ h^{(\alpha_2 - 1) k} \norm{\tilde u}_{L^2(\omega)}.
\end{aligned}
\end{equation}    
We will write $u = \tilde u$ from now on, and recall that $u$ is an arbitrary solution to $\Delta u = 0$ in $B(R)$.
For a nonzero $u$, we define
\begin{align*}
	r := \frac{\norm{u}_{L^2(\omega)}}{\norm{ u}_{H^{k+1}(\Omega)}},
\end{align*}
and choose $h > 0$ such that 
\begin{align*}
	h^{\alpha_1 k} \norm{ u}_{H^{k+1}(\Omega)}
	= h^{(\alpha_2 - 1) k} \norm{ u}_{L^2(\omega)},
\end{align*}
that is,
\begin{align*}
	h = r^{1/((\alpha_1+1-\alpha_2)k)}.
\end{align*}
With this choice, inequality \eqref{eq:back_to_cont} reduces to
\begin{equation}\label{eq:three_ball2}
\norm{ u}_{L^2(B)} \lesssim \norm{ u}_{L^2(\omega)}^{\tilde \alpha} \norm{ u}_{H^{k+1}(\Omega)}^{1-\tilde \alpha},
\end{equation}
which trivially holds for the zero solution also.
Observe that (\ref{eq:three_ball2}) would right away contradict the optimality of $\alpha$ in Theorem \ref{thm:opt_alpha} if the $H^{k+1}$-norm on its right-hand side was an $L^2$-norm.
To weaken this norm, we can use Lemma \ref{lem:shiftnorm} to get that $\| u\|_{H^{k+1}(B(r_3))} \lesssim \| u\|_{L^2(B(r_4))}$ for $r_3 < r_4 < R$.
Hence, using this bound in \eqref{eq:three_ball2} leads to
\begin{align}\label{eq:three_ball3}
\norm{u}_{L^2(B)} \lesssim \norm{ u}_{L^2(\omega)}^{\tilde \alpha} \norm{ u}_{L^2(B(r_4))}^{1-\tilde \alpha}.
\end{align}
We now denote by $\hat{\alpha}$ the optimal exponent corresponding to $r_4$ in the three-ball estimate in Theorem \ref{thm:opt_alpha}, for which
\begin{equation}\label{eq:three_ball4}
	\norm{u}_{L^2(B)} \lesssim \norm{ u}_{L^2(\omega)}^{\hat{\alpha}} \norm{ u}_{L^2(B(r_4))}^{1-\hat{\alpha}},
\end{equation}
for any harmonic function $u$.
This means that such an inequality cannot hold with an exponent larger than $\hat{\alpha}$.
However, since $\hat{\alpha}$ depends continuously on $r_4$, by considering $r_4 > r_3$ sufficiently close to $r_3$ we can get $\hat{\alpha}$ arbitrarily close to $\alpha$, i.e. $\tilde{\alpha} > \hat{\alpha} > \alpha$.
Thus inequality \eqref{eq:three_ball3} holds with $\tilde{\alpha} > \hat{\alpha}$, which contradicts the optimality of $\hat{\alpha}$ in \eqref{eq:three_ball4}.

Let us finally show that if $\alpha_1, \alpha_2 \in [\alpha,1)$ with $\alpha_1 > \alpha$ or $\alpha_2 > \alpha$, then $\tilde \alpha > \alpha$.
Consider first the case $\alpha_1 < \alpha_2$.
As $\alpha_j \in [\alpha,1)$, $j=1,2$, there holds $\alpha_2 - \alpha_1 \in (0,1)$ and
\begin{align*}
	\tilde \alpha \ge \frac{\alpha}{1-(\alpha_2 - \alpha_1)}
	> \alpha.
\end{align*}
For the case $\alpha_1 \ge \alpha_2$, we have that $\alpha_1 = \alpha + \epsilon$ for some $\epsilon > 0$,
and    
\begin{align*}
	\tilde \alpha - \alpha \ge \frac{\alpha + \epsilon}{1 + \epsilon} - \alpha = \frac{(1-\alpha)\epsilon}{1 + \epsilon} > 0,
\end{align*}
which concludes the proof.
\qed
\end{proof}

\begin{remark}
	The proof of Theorem \ref{thm:optimal} is still valid if we assume \eqref{eq:too_good} to hold with a weaker norm instead of $\norm{u}_{H^{k+1}(\Omega)}$.
\end{remark}

\begin{remark}
	The approximation $u_h$ in Theorem \ref{thm:optimal} depends only on $h$ and $q+\delta q$.
	This result does not exclude the possibility of a regularisation method that uses more information, for example the size of the perturbation $\delta q$.
	The optimal method that we present in the following section can also use this information, see Remark \ref{remark:hmin}.
\end{remark}

\section{Primal-dual finite element methods with weakly consistent regularisation}\label{sec:FEM}
In this section we will use a finite element method with weakly consistent stabilisation to construct a sequence of approximate solutions for unique continuation \eqref{eq:UC} that satisfy the error estimate \eqref{eq:optimal_conv}, showing that the optimal convergence for this ill-posed problem can be attained by a discrete approximation method.
This discussion is based on ideas from \cite{Bu16,BHL18}, modified to match the assumptions of the theoretical developments above.

Let $\{\mathcal{T}\}$ be a quasi-uniform family of triangulations of $\Omega$, where triangles $T$ with curved boundaries are allowed so that the the covering of $\Omega$ is exact \cite{Zlamal70,Bern89}.
On these meshes we define a $C^0$ finite element space $V_h \subset H^1(\Omega)$, consisting of piecewise polynomials of order $k$ (after mapping of the triangles to a reference element).
We also let $V_{0h} = V_h \cap H^1_0(\Omega)$.
It then follows that there exist interpolants $\Pi_h:H^{1}(\Omega) \mapsto V_h$ \cite[Corollary 4.1]{Bern89} and
$\Pi_h^0: H^1_0(\Omega) \mapsto V_{0h}$ \cite[Corollary 5.2]{Bern89} for which the following interpolation estimates hold
\begin{equation}\label{eq:approx}
\|u - \Pi_h u\|_{T} + h \|\nabla (u - \Pi_h u)\|_T  + h^2 \|D^2(u - \Pi_h u)\|_{T} \lesssim
h^{k+1} |u|_{H^{k+1}(\Delta_T)},
\end{equation}
where $\Delta_T := \{T' \in \mathcal{T}: T' \cap T \ne \emptyset \}$ and $D^2 u$ is the Hessian of $u$, and
\begin{equation}\label{eq:low_order_approx}
\|w - \Pi^0_h w\|_{L^2(\Omega)} + h \|\Pi_h^0 w\|_V \lesssim
h \|w\|_V.
\end{equation} 
We will also use the broken norm defined by
\[
\|v\|_{\mathcal{T}} := \left( \sum_{T \in \mathcal{T}} \|v\|_T^2\right)^{1/2}.
\]
To set up the numerical method, we formulate the continuation problem \eqref{eq:UC} as pde-constrained optimization and consider the Lagrangian $L_h : V_h \times V_{0h} \to \R$,
\begin{equation*}
	L_h(u_h,z_h) := \underbrace{{\textstyle\frac12} \norm{u_h - \tilde{q}}^2_{L^2(\omega)}}_{\text{data fit}} + \underbrace{a(u_h,z_h)}_{\text{pde constraint}} + \underbrace{{\textstyle\frac{1}{2}} s(u_h,u_h) - {\textstyle\frac{1}{2}} a(z_h,z_h)}_{\text{discrete regularisation}}.
\end{equation*}
By taking its saddle points, we define the finite element method as follows: find $(u_h,z_h)  \in V_h \times V_{0h}$ such that
\begin{equation}
\begin{aligned}
a(u_h,w_h) - a(z_h,w_h) &= 0 \\
a(v_h,z_h) + s(u_h,v_h) + (u_h,v_h)_{L^2(\omega)} &=  (\tilde q,v_h)_{L^2(\omega)} 
\end{aligned}    
\end{equation}
for all $(v_h,w_h)  \in V_h \times V_{0h}$, with
\begin{equation}\label{eq:stab_def}
\begin{aligned}
	s(u_h,v_h):={}& \sum_{T \in \mathcal{T}}
	\left( (h_T^2 \Delta u_h, \Delta v_h)_T +
	\sum_{F \in \partial T} (h_T  \jumpF{\nabla u_h}, \jumpF{\nabla v_h})_{F\setminus \partial \Omega} \right)\\
	&+h^{2k} (u_h,v_h)_{L^2(\Omega)} 
\end{aligned}
\end{equation}
where $F$ denotes a face of a triangle $T$ and the jump of the gradient over a face $F$ is defined by
$\jumpF{\nabla u_h} := \nabla u_h\vert_{T_1} n_{T_1} + \nabla u_h\vert_{T_2} n_{T_2}$
for $F = \bar T_1 \cap \bar T_2$, with $n_{T_m}$ the outward pointing unit normal of the triangle $T_m$.
For a more compact formulation we introduce the global form $A_h$,
\begin{align*}
	A_h[(x_h,y_h),(v_h,w_h)] :={}& a(x_h,w_h) - a(y_h,w_h)\\
	& +a(v_h,y_h) + s(x_h,v_h) + (x_h,v_h)_{L^2(\omega)}
\end{align*}
to write: find $(u_h,z_h)  \in V_h \times V_{0h}$ such that
\begin{equation}\label{eq:FEM}
A_h[(u_h,z_h),(v_h,w_h)] = (\tilde q,v_h)_{L^2(\omega)} 
\end{equation}
for all $(v_h,w_h)  \in V_h \times V_{0h}$.
Observe that this form satisfies the consistency property
\begin{equation}\label{eq:gal_ortho}
A_h[( u - u_h, -z_h),(v_h,w_h)] = h^{2k} (u,v_h) - (\delta
q, v_h)_{L^2(\omega)}.
\end{equation}
To show that this method satisfies the error bound \eqref{eq:optimal_conv},
we only need to verify that it satisfies \eqref{eq:res}, \eqref{eq:data} and \eqref{eq:apriori} (which represent the design criteria for the method).
To this end we introduce the norm 
$$
\triple{(v_h,w_h)}^2 := s(v_h,v_h)+ \| w_h\|^2_V+ \|v_h\|^2_{L^2(\omega)}
$$
and we observe that the formulation satisfies the positivity property
\begin{equation}\label{eq:positivity}
\triple{(u_h,z_h)}^2 =  A_h[(u_h,z_h),(u_h,-z_h)] 
\end{equation}
which ensures the existence of a discrete solution $(u_h, z_h) \in V_h
\times V_{0h}$ for all $h>0$.

We proceed by first proving convergence in the $S$-norm, which immediately gives \eqref{eq:data}.
The proof of the other two bounds and satisfaction of \eqref{eq:optimal_conv} then follow as a corollary.
First we establish an approximation result for the $S$ norm.
\begin{lemma}\label{lem:stab_approx}
Let $v \in H^{k+1}(\Omega)$, then there holds
\[
\triple{(v - \Pi_h v,0)} \lesssim h^{k} \|v\|_{H^{k+1}(\Omega)}.
\]
\end{lemma}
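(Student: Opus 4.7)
The plan is to decompose $\triple{(v - \Pi_h v, 0)}^2$ into its three constituent pieces (with the middle $\|w_h\|_V^2$ contributing nothing since $w_h = 0$), and bound each one separately against $h^{2k} \|v\|_{H^{k+1}(\Omega)}^2$ using the interpolation estimate \eqref{eq:approx} together with a standard trace inequality on element faces. The only ingredients needed are \eqref{eq:approx}, the observation that $\|\Delta w\|_T \lesssim \|D^2 w\|_T$, a trace inequality of the form $\|\varphi\|_F^2 \lesssim h_T^{-1}\|\varphi\|_T^2 + h_T \|\nabla \varphi\|_T^2$ on faces $F \subset \partial T$, and the shape-regularity that makes $\sum_T |v|_{H^{k+1}(\Delta_T)}^2 \lesssim |v|_{H^{k+1}(\Omega)}^2$.

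First, for the $L^2(\omega)$ contribution I would use $\|v - \Pi_h v\|_{L^2(\omega)}^2 \le \|v - \Pi_h v\|_{L^2(\Omega)}^2 \lesssim h^{2(k+1)} |v|_{H^{k+1}(\Omega)}^2$, which is already smaller than the claimed bound. Next, for the cell-wise Laplace term I would apply \eqref{eq:approx} to get $h_T^2 \|\Delta(v - \Pi_h v)\|_T^2 \lesssim h_T^2 \|D^2(v - \Pi_h v)\|_T^2 \lesssim h^{2k} |v|_{H^{k+1}(\Delta_T)}^2$, and sum over $T$. For the jump term, invoking the trace inequality on each face of each element gives
\begin{align*}
h_T \|\jumpF{\nabla(v - \Pi_h v)}\|_{F}^2
&\lesssim \|\nabla(v - \Pi_h v)\|_T^2 + h_T^2 \|D^2(v - \Pi_h v)\|_T^2 \\
&\lesssim h^{2k} |v|_{H^{k+1}(\Delta_T)}^2,
\end{align*}
again by \eqref{eq:approx}, and summation over $T$ and $F \in \partial T$ gives the desired $h^{2k} |v|_{H^{k+1}(\Omega)}^2$ bound. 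Finally, the Tikhonov piece $h^{2k} \|v - \Pi_h v\|_{L^2(\Omega)}^2$ is bounded by $h^{2k} \cdot h^{2(k+1)} |v|_{H^{k+1}(\Omega)}^2$, which is higher order and harmless.

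I do not expect any genuine obstacle: this is a standard consistency/approximation computation for a stabilisation of CIP + Laplacian residual + Tikhonov type, and the scaling has been engineered precisely so that each term in $s(\cdot,\cdot)$ contributes at the $h^{2k}$ level when fed the interpolation error of an $H^{k+1}$ function. The only mildly non-routine point is the jump term, where one has to remember that $\Pi_h v$ is continuous (so the jumps of $\nabla \Pi_h v$ are well defined as element-wise traces) and that the trace inequality must be applied on both sides of each interior face, but shape regularity absorbs the combinatorial overlap into the implicit constant. Combining the three bounds and taking the square root yields $\triple{(v - \Pi_h v, 0)} \lesssim h^k \|v\|_{H^{k+1}(\Omega)}$, which is exactly the claim.
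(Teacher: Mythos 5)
Your proof is correct and follows essentially the same route as the paper's: expand the $\triple{\cdot}$-norm, bound the elementwise Laplacian, Tikhonov and $L^2(\omega)$ terms by the interpolation estimate \eqref{eq:approx}, and treat the gradient-jump term with the elementwise trace inequality plus \eqref{eq:approx} (the paper merely rewrites the jump as $\jumpF{\nabla \Pi_h v} = -\jumpF{\nabla(v-\Pi_h v)}$ first, using that $\nabla v$ has no interface jump, which is equivalent to what you do). No gaps.
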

\begin{proof}
By the definition of the $S$-norm we see that
\begin{equation*}
	\begin{aligned}
		\triple{(v - \Pi_h v,0)}^2 ={}& \|h \Delta (v - \Pi_h v)\|^2_{\mathcal{T}} +
		\sum_{T \in \mathcal{T}} \sum_{F \in \partial T\setminus \partial \Omega} h_T \|\jumpF{\nabla \Pi_h v}\|^2_F \\
		&+ h^{2k} \|v - \Pi_h v\|_{L^2(\Omega)}^2 + \|v - \Pi_h v\|^2_{L^2(\omega)}.
	\end{aligned}
\end{equation*}
By the approximation property \eqref{eq:approx} we have that
\[
\|h \Delta (v - \Pi_h v)\|^2_{\mathcal{T}}  + h^{2k} \|v - \Pi_h
v\|_{L^2(\Omega)}^2+ \|v - \Pi_h v\|^2_{L^2(\omega)} \lesssim h^{2k} |v|_{H^{k+1}(\Omega)}.
\]
For the term measuring the jump of $\Pi_h v$ over element faces we note that
\begin{equation*}
	\begin{aligned}
		\|\jumpF{\nabla \Pi_h v}\|^2_F &\leq \|\jumpF{\nabla (v - \Pi_h v)}\|^2_F \\
		&\leq h^{-1} \|\nabla (v - \Pi_h v)\|^2_{T_1 \cup T_2} + h \|D^2 (v - \Pi_h v)\|^2_{T_1 \cup T_2}
	\end{aligned}
\end{equation*}
where we used the regularity of $v$ and the trace inequality \cite{MS99}
\[
\|v\|_{\partial T} \lesssim h^{-\frac12} \|v\|_T + h^{\frac12}
\|\nabla v\|_T, \quad \forall v \in H^1(T).
\]
We conclude by applying \eqref{eq:approx} once again and summing over all the faces.
\qed
\end{proof}
\begin{proposition}\label{prop:res_conv}
Let $(u_h,z_h)$ denote the solution to \eqref{eq:FEM} and let $u \in H^{k+1}(\Omega)$ be the solution to \eqref{eq:UC}, then there holds
\[
\triple{(u - u_h,z_h)} \lesssim h^{k} \|u\|_{H^{k+1}(\Omega)} +
\|\delta q\|_{L^2(\omega)}.
\]
\end{proposition}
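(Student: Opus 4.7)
The plan is to split the error via the interpolant $\Pi_h u$. Write $u - u_h = \eta_h + \xi_h$ with $\eta_h := u - \Pi_h u$ and $\xi_h := \Pi_h u - u_h \in V_h$. By the triangle inequality,
$\triple{(u - u_h, z_h)} \le \triple{(\eta_h, 0)} + \triple{(\xi_h, z_h)}$, and the interpolation part $\triple{(\eta_h, 0)} \lesssim h^k \|u\|_{H^{k+1}(\Omega)}$ follows directly from Lemma \ref{lem:stab_approx}. The task therefore reduces to controlling the discrete component $\triple{(\xi_h, z_h)}$.

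For this, I exploit the positivity identity \eqref{eq:positivity} applied to $(\xi_h, -z_h)$, giving $\triple{(\xi_h, z_h)}^2 = A_h[(\xi_h, -z_h), (\xi_h, z_h)]$. Decomposing $(\xi_h, -z_h) = -(\eta_h, 0) + (u - u_h, -z_h)$ by linearity and inserting the consistency identity \eqref{eq:gal_ortho} tested with $(\xi_h, z_h)$ yields
\[
\triple{(\xi_h, z_h)}^2 = -A_h[(\eta_h, 0), (\xi_h, z_h)] + h^{2k}(u, \xi_h)_{L^2(\Omega)} - (\delta q, \xi_h)_{L^2(\omega)}.
\]
It then suffices to bound each right-hand side contribution by $(h^k \|u\|_{H^{k+1}(\Omega)} + \|\delta q\|_{L^2(\omega)})\, \triple{(\xi_h, z_h)}$, after which one divides through by $\triple{(\xi_h, z_h)}$ and combines with the interpolation estimate.

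Expanding $A_h[(\eta_h, 0), (\xi_h, z_h)]$ produces three pieces. The coupling term $a(\eta_h, z_h)$ is handled by a direct Cauchy--Schwarz using the approximation bound $\|\nabla \eta_h\|_{L^2(\Omega)} \lesssim h^k |u|_{H^{k+1}(\Omega)}$ from \eqref{eq:approx} together with $\|z_h\|_V \le \triple{(\xi_h, z_h)}$; the stabilization cross-term $s(\eta_h, \xi_h)$ and the data-fit cross-term $(\eta_h, \xi_h)_{L^2(\omega)}$ are each bounded by Cauchy--Schwarz in the respective (semi)inner product, followed by Lemma \ref{lem:stab_approx}. The perturbation contribution $(\delta q, \xi_h)_{L^2(\omega)}$ is absorbed immediately by the $L^2(\omega)$ piece of the triple norm.

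The main subtlety, and the step I would double-check carefully, is the Tikhonov cross term $h^{2k}(u, \xi_h)_{L^2(\Omega)}$: one writes it as $h^k \|u\|_{L^2(\Omega)} \cdot (h^k \|\xi_h\|_{L^2(\Omega)})$ and exploits that the specific $h^{2k}\|\cdot\|^2_{L^2(\Omega)}$ weight built into \eqref{eq:stab_def} yields $h^{2k}\|\xi_h\|^2_{L^2(\Omega)} \le s(\xi_h, \xi_h) \le \triple{(\xi_h, z_h)}^2$. This is precisely the scaling for which the Tikhonov regularisation is weakly consistent while still providing enough control on $\xi_h$ in $L^2(\Omega)$ to absorb the consistency residual; getting the powers of $h$ to match here is the only point where the argument is sensitive to the design of the method.
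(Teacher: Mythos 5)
Your proposal is correct and follows essentially the same route as the paper's proof: the same splitting $u-u_h=(u-\Pi_h u)+(\Pi_h u-u_h)$, the positivity identity \eqref{eq:positivity} combined with the consistency relation \eqref{eq:gal_ortho}, Cauchy--Schwarz on each term of $A_h[(\Pi_h u-u,0),(\xi_h,z_h)]$ together with Lemma \ref{lem:stab_approx}, and the absorption of the $h^{2k}(u,\xi_h)_{L^2(\Omega)}$ and $(\delta q,\xi_h)_{L^2(\omega)}$ terms via the $h^{2k}\|\cdot\|_{L^2(\Omega)}^2$ and $L^2(\omega)$ components of the $S$-norm. The point you flag as the "main subtlety" is indeed exactly how the paper handles the Tikhonov cross term, so no gaps remain.
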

\begin{proof}
First we decompose the error $u -u_h = u - \Pi_h u + \underbrace{\Pi_h u - u_h}_{=:e_h}$ in the continuous and discrete parts.
By the triangle inequality and Lemma \ref{lem:stab_approx} it is enough to bound $\triple{(e_h,z_h)}$.
Using \eqref{eq:positivity} and \eqref{eq:gal_ortho} we have
\begin{equation*}
\begin{aligned}
	\triple{(e_h, z_h)}^2 = A_h[(e_h,-z_h),(e_h,z_h)] ={}& A_h[(\Pi_h u - u,0),	(e_h,z_h)]\\
	&- (\delta q, e_h)_{L^2(\omega)} + h^{2k} (u, e_h)_{L^2(\Omega)}.
\end{aligned}
\end{equation*}
For the last two terms on the right hand side we have
\[
-(\delta q, e_h)_{L^2(\omega)} + h^{2k} (u, e_h)_{L^2(\Omega)} \leq
(\|\delta q\|_{L^2(\omega)} + h^{k} \|u\|_{L^2(\Omega)}) \triple{(e_h, 0)}.
\]
Finally, the following continuity holds
\begin{equation}\label{eq:cont1}
A_h[(\Pi_h u - u,0),
(e_h,z_h)] \lesssim \| \Pi_h u - u\|_* \triple{(e_h, z_h)}
\end{equation}
where 
\[
 \|v \|_*:= \| v\|_V +  \triple{v}.
\]
To prove the continuity \eqref{eq:cont1}  recall that by definition
\[
A_h[(\Pi_h u - u,0),
(e_h,z_h)]  = a(\Pi_h u - u,z_h) + s(\Pi_h u - u,e_h) + (\Pi_h u - u,e_h)_{L^2(\omega)}.
\]
Using the Cauchy-Schwarz inequality we have that
\[
a(\Pi_h u - u,z_h) \leq  \| \Pi_h u - u\|_V \triple{(0, z_h)}
\]
and
\[
s(\Pi_h u - u,e_h) + (\Pi_h u - u,e_h)_{L^2(\omega)} \leq \triple{(\Pi_h u - u,0)} \triple{(e_h,0)}.
\]
We end the proof by observing that by equation \eqref{eq:approx} and Lemma \ref{lem:stab_approx} there holds
\begin{equation*}\label{eq:star_approx}
\| \Pi_h u - u \|_* \lesssim h^{k} |u|_{H^{k+1}(\Omega)}.
\end{equation*}
\qed
\end{proof}
\begin{corollary}\label{cor:fem_error_estimate}
Under the same hypothesis as for Proposition \ref{prop:res_conv} there holds
\[
\|u_h\|_{L^2(\Omega)} \lesssim \|u\|_{H^{k+1}(\Omega)} +
h^{-k}\|\delta q\|_{L^2(\omega)}
\]
and
\[
\|\Delta u_h\|_{H^{-1}(\Omega)} \lesssim h^{k} \|u\|_{H^{k+1}(\Omega)} +
\|\delta q\|_{L^2(\omega)}.
\]
Finally $u_h$ satisfies the error bound \eqref{eq:optimal_conv}.
\end{corollary}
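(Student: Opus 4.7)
The three statements all fall out of Proposition \ref{prop:res_conv}, which bounds $\triple{(u-u_h,z_h)}$ by $h^k \norm{u}_{H^{k+1}(\Omega)} + \norm{\delta q}_{L^2(\omega)}$. The $L^2(\Omega)$-bound on $u_h$ is immediate because the stabiliser $s$ contains the Tikhonov term $h^{2k}\norm{\cdot}^2_{L^2(\Omega)}$, so $h^k\norm{u-u_h}_{L^2(\Omega)} \le \triple{(u-u_h,z_h)}$; a triangle inequality against $\norm{u}_{L^2(\Omega)} \le \norm{u}_{H^{k+1}(\Omega)}$ then yields exactly \eqref{eq:apriori}.

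For the $H^{-1}$-norm of $\Delta u_h$, the plan is to test against $\phi \in H^1_0(\Omega)$ and split $a(u_h,\phi) = a(u_h,\Pi_h^0 \phi) + a(u_h,\phi - \Pi_h^0\phi)$. The conforming piece is reduced via the primal Euler--Lagrange equation $a(u_h,w_h) = a(z_h,w_h)$ for all $w_h \in V_{0h}$ to $a(z_h,\Pi_h^0\phi) \lesssim \norm{z_h}_V\norm{\phi}_V$, using the $V$-stability of $\Pi_h^0$ from \eqref{eq:low_order_approx}. The nonconforming remainder is handled by elementwise integration by parts, producing volume residuals $\Delta u_h|_T$ and interior gradient jumps $\jumpF{\nabla u_h}$ paired with $\phi - \Pi_h^0\phi$; Cauchy--Schwarz, \eqref{eq:low_order_approx} and a standard trace inequality bound these contributions by the residual and jump components of $\sqrt{s(u_h,u_h)}$ times $\norm{\phi}_V$. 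The main technical point is then to relate $s(u_h,u_h)$ to the quantity actually controlled by Proposition \ref{prop:res_conv}: since $u$ is harmonic and smooth, $\Delta u = 0$ and $\jumpF{\nabla u} = 0$, so the residual and jump parts of $s$ evaluated at $u_h$ coincide with those at $u - u_h$, and are hence bounded by $\triple{(u-u_h,z_h)}^2$. This establishes \eqref{eq:res}.

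The final bound \eqref{eq:optimal_conv} is now bookkeeping. The three design assumptions of Section \ref{sec:ill_posed_opt} are all available: \eqref{eq:res} and \eqref{eq:apriori} have just been verified, and \eqref{eq:data} follows from $\norm{q-u_h}_{L^2(\omega)} = \norm{u-u_h}_{L^2(\omega)} \le \triple{(u-u_h,z_h)}$ together with Proposition \ref{prop:res_conv}. Substituting them into \eqref{eq:protobound} and applying a weighted Young inequality to the resulting cross terms reproduces exactly the derivation of \eqref{eq:optimal_conv} already given in Section \ref{sec:ill_posed_opt}. I expect the main obstacle to be the $H^{-1}$ estimate, where the interplay between the test function, the interpolant $\Pi_h^0$, and the harmonicity/smoothness of $u$ must be handled carefully in order to keep every resulting term within the budget offered by $\triple{(u-u_h,z_h)}$.
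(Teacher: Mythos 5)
Your proposal is correct and follows essentially the same route as the paper: the $L^2(\Omega)$ bound via the Tikhonov term in $s$ and a triangle inequality, the $H^{-1}$ bound via the first Euler--Lagrange equation $a(u_h,w_h)=a(z_h,w_h)$, elementwise integration by parts against $\phi-\Pi_h^0\phi$ with \eqref{eq:low_order_approx} and a trace inequality, and the final estimate by inserting \eqref{eq:res}--\eqref{eq:apriori} into \eqref{eq:protobound}. The only (cosmetic) difference is that you integrate by parts first and then invoke harmonicity of $u$ to identify the residual and jump terms with those of $u-u_h$, whereas the paper subtracts $u$ first; the resulting terms and bounds are identical.
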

\begin{proof}
First we observe that the third claim is an immediate consequence of the first two and Proposition \ref{prop:res_conv}.
Indeed, this follows from the discussion of Section \ref{sec:ill_posed_opt}, using the error bound \eqref{eq:protobound} and equations \eqref{eq:res} - \eqref{eq:apriori}.

The first inequality is immediate by Proposition \ref{prop:res_conv} observing that
\[
\|u_h\|_{L^2(\Omega)} \leq \| u-u_h\|_{L^2(\Omega)}+\|u\|_{L^2(\Omega)},
\]
and, for the first term in the right hand side,
\[
\| u-u_h\|_{L^2(\Omega)} \leq h^{-k} \triple{(u-u_h,0)} \lesssim \|u\|_{H^{k+1}(\Omega)} +
h^{-k}\|\delta q\|_{L^2(\omega)}.
\]
For the second inequality, by definition
\[
\|\Delta u_h\|_{H^{-1}(\Omega)} = \sup_{w \in V_0\setminus \{0\}} \frac{a(u_h,w)}{\|w\|_V}.
\]
Using \eqref{eq:gal_ortho}, followed by integration by parts, we see that for all $w_h \in V_{0h}$
\begin{multline*}
a(u_h,w) = a(u_h-u ,w) = a(u_h - u,w - w_h) + a(z_h,w_h) \\=\sum_{T
  \in \mathcal{T}} \left(-(\Delta (u_h - u),w - w_h)_{L^2(T)} + (\jumpF{\nabla
    u_h},w - w_h)_{L^2(\partial T \setminus \partial \Omega)} \right)   + a(z_h,w_h).
\end{multline*}
Choosing $w_h = \Pi^0_h w$ and using the Cauchy-Schwarz inequality in the first term of the right hand side
and the continuity of $a$ in the second, followed by \eqref{eq:low_order_approx}, we see that
\begin{align*}
{}&\sum_{T\in \mathcal{T}} \left( -(\Delta (u_h - u),w - w_h)_{L^2(T)}
+ (\jumpF{\nabla u_h},w - w_h)_{L^2(\partial T \setminus \partial \Omega)} \right)
+ a(z_h,w_h) \\
&\lesssim \triple{(u_h - u,z_h)} \|w\|_V.	
\end{align*}
The conclusion now follows using Proposition \ref{prop:res_conv} to obtain the desired bound
\[
\|\Delta u_h\|_{H^{-1}(\Omega)} = \sup_{w \in V_0\setminus \{0\}}
\frac{a(u_h,w)}{\|w\|_V} \lesssim h^{k} \|u\|_{H^{k+1}(\Omega)} +
\|\delta q\|_{L^2(\omega)}.
\]
\qed
\end{proof}

\begin{remark}\label{remark:hmin}
Both for the well-posed problem \eqref{eq:Poisson} and the ill-posed problem \eqref{eq:UC} there is a lower bound for how  well the exact solution can be approximated if the data are perturbed.
In the well-posed case the limit is trivially given by $\|\delta f\|_{{V_0'}}$ in \eqref{eq:error_bound_highorder}, whereas in the ill-posed case the lower bound occurs when the approximation error term and the perturbation term are equal in \eqref{eq:optimal_conv}, that is, 
\begin{equation*}
h^{\alpha k}
\|u\|_{H^{k+1}(\Omega)} = h^{(\alpha -1)k}
\|\delta q\|_{L^2(\omega)}.   
\end{equation*}
This gives a theoretical lower bound on $h$ for which refining the mesh decreases the error bound,
\[
h_{min} = (\|\delta q\|_{L^2(\omega)}/ \|u\|_{H^{k+1}(\Omega)})^{1/k}.
\]
If $h_{min}$ is known the numerical scheme can be designed to stagnate at the level of the best approximation, by modifying the last term in the definition of the stabilisation \eqref{eq:stab_def} to read
$\max(h,h_{min})^{2k} (u_h,v_h)_{L^2(\Omega)} $. This shows the connection between this stabilising term and classical Tikhonov regularisation and similar tools as for the latter can be applied here to optimise the parameter compared to perturbations in data.
It is straighforward to show that this leads to stagnation at
\[
\|u - u_h\|_{L^2(B)} \lesssim  h_{min}^{\alpha k} \|u\|_{H^{k+1}(\Omega)}
= \|\delta q\|_{L^2(\omega)}^{\alpha} \|u\|_{H^{k+1}(\Omega)}^{1-\alpha}.
\]
Here the implicit constant may depend on $k$. A similar kind of bound was obtained in \cite[Theorem 2.2]{DMS23}.
\end{remark}

\section{Conclusion}
In this paper we have shown that the convergence order of the approximation error for unique continuation problems, obtained by combining the approximation orders of the data fitting and the pde-residual with the conditional stability, can not be improved without increasing the sensitivity to perturbations.
This shows that the asymptotic accuracy of the methods for unique continuation discussed in \cite{Bu14b,Bu16,BHL18,BLO18,BNO19,BNO20a,DMS23} is optimal, in the sense that it is impossible to design a method with better convergence properties.
The only remaining possibilities to enhance the accuracy of approximation methods is either to resort to adaptivity, or to introduce some additional a priori assumption to make the continuous problem more stable, such as finite dimensionality of target quantities (see \cite{burman2023finite}).

\begin{acknowledgements}
The authors thank Tuomo Kuusi for valuable advise on three-ball inequalities, and the two anonymous reviewers for their valuable comments and suggestions, which have improved the paper. 
\end{acknowledgements}

%
%

\bibliographystyle{spmpsci}      
\bibliography{biblio}   

\begin{thebibliography}{10}
\providecommand{\url}[1]{{#1}}
\providecommand{\urlprefix}{URL }
\expandafter\ifx\csname urlstyle\endcsname\relax
  \providecommand{\doi}[1]{DOI~\discretionary{}{}{}#1}\else
  \providecommand{\doi}{DOI~\discretionary{}{}{}\begingroup
  \urlstyle{rm}\Url}\fi

\bibitem{ARRV09}
Alessandrini, G., Rondi, L., Rosset, E., Vessella, S.: The stability for the
  {C}auchy problem for elliptic equations.
\newblock Inverse Problems \textbf{25}(12), 123004, 47 (2009).
\newblock \doi{10.1088/0266-5611/25/12/123004}.
\newblock \urlprefix\url{https://doi.org/10.1088/0266-5611/25/12/123004}

\bibitem{kuusi2021}
Armstrong, S., Kuusi, T., Smart, C.: {Optimal unique continuation for periodic
  elliptic equations on large scales} (2021).
\newblock Preprint arXiv 2107.14248

\bibitem{Bab70}
Babu\v{s}ka, I.: Error-bounds for finite element method.
\newblock Numer. Math. \textbf{16}, 322--333 (1970/71).
\newblock \doi{10.1007/BF02165003}.
\newblock \urlprefix\url{https://doi.org/10.1007/BF02165003}

\bibitem{Bern89}
Bernardi, C.: Optimal finite-element interpolation on curved domains.
\newblock SIAM J. Numer. Anal. \textbf{26}(5), 1212--1240 (1989).
\newblock \doi{10.1137/0726068}.
\newblock \urlprefix\url{https://doi.org/10.1137/0726068}

\bibitem{BBFV20}
Boulakia, M., Burman, E., Fern\'{a}ndez, M.A., Voisembert, C.: Data
  assimilation finite element method for the linearized {N}avie-{S}tokes
  equations in the low {R}eynolds regime.
\newblock Inverse Problems \textbf{36}(8), 085003--85024 (2020).
\newblock \doi{10.1088/1361-6420/ab9161}.
\newblock \urlprefix\url{https://doi.org/10.1088/1361-6420/ab9161}

\bibitem{Bour05}
Bourgeois, L.: A mixed formulation of quasi-reversibility to solve the {C}auchy
  problem for {L}aplace's equation.
\newblock Inverse Problems \textbf{21}(3), 1087--1104 (2005).
\newblock \doi{10.1088/0266-5611/21/3/018}.
\newblock \urlprefix\url{https://doi.org/10.1088/0266-5611/21/3/018}

\bibitem{BR18}
Bourgeois, L., Recoquillay, A.: A mixed formulation of the {T}ikhonov
  regularization and its application to inverse {PDE} problems.
\newblock ESAIM Math. Model. Numer. Anal. \textbf{52}(1), 123--145 (2018).
\newblock \doi{10.1051/m2an/2018008}.
\newblock \urlprefix\url{https://doi.org/10.1051/m2an/2018008}

\bibitem{Bru95}
Brummelhuis, R.: Three-spheres theorem for second order elliptic equations.
\newblock Journal d’Analyse Mathematique \textbf{65}(1), 179--206 (1995)

\bibitem{Bu13}
Burman, E.: Stabilized finite element methods for nonsymmetric, noncoercive,
  and ill-posed problems. {P}art {I}: {E}lliptic equations.
\newblock SIAM J. Sci. Comput. \textbf{35}(6), A2752--A2780 (2013).
\newblock \doi{10.1137/130916862}.
\newblock \urlprefix\url{http://dx.doi.org/10.1137/130916862}

\bibitem{Bu14b}
Burman, E.: Error estimates for stabilized finite element methods applied to
  ill-posed problems.
\newblock C. R. Math. Acad. Sci. Paris \textbf{352}(7-8), 655--659 (2014).
\newblock \doi{10.1016/j.crma.2014.06.008}.
\newblock \urlprefix\url{http://dx.doi.org/10.1016/j.crma.2014.06.008}

\bibitem{Bu16}
Burman, E.: Stabilised finite element methods for ill-posed problems with
  conditional stability.
\newblock In: Building bridges: connections and challenges in modern approaches
  to numerical partial differential equations, \emph{Lect. Notes Comput. Sci.
  Eng.}, vol. 114, pp. 93--127. Springer, [Cham] (2016)

\bibitem{BFO20a}
Burman, E., Feizmohammadi, A., Oksanen, L.: A finite element data assimilation
  method for the wave equation.
\newblock Math. Comp. \textbf{89}(324), 1681--1709 (2020).
\newblock \doi{10.1090/mcom/3508}.
\newblock \urlprefix\url{https://doi.org/10.1090/mcom/3508}

\bibitem{BH18}
Burman, E., Hansbo, P.: Stabilized nonconforming finite element methods for
  data assimilation in incompressible flows.
\newblock Math. Comp. \textbf{87}(311), 1029--1050 (2018).
\newblock \doi{10.1090/mcom/3255}.
\newblock \urlprefix\url{https://doi.org/10.1090/mcom/3255}

\bibitem{BHL18}
Burman, E., Hansbo, P., Larson, M.G.: Solving ill-posed control problems by
  stabilized finite element methods: an alternative to {T}ikhonov
  regularization.
\newblock Inverse Problems \textbf{34}(3), 035004, 36 (2018).
\newblock \doi{10.1088/1361-6420/aaa32b}.
\newblock \urlprefix\url{https://doi.org/10.1088/1361-6420/aaa32b}

\bibitem{BLO18}
Burman, E., Larson, M.G., Oksanen, L.: Primal-dual mixed finite element methods
  for the elliptic {C}auchy problem.
\newblock SIAM J. Numer. Anal. \textbf{56}(6), 3480--3509 (2018).
\newblock \doi{10.1137/17M1163335}.
\newblock \urlprefix\url{https://doi.org/10.1137/17M1163335}

\bibitem{BNO19}
Burman, E., Nechita, M., Oksanen, L.: Unique continuation for the {H}elmholtz
  equation using stabilized finite element methods.
\newblock J. Math. Pures Appl. (9) \textbf{129}, 1--22 (2019).
\newblock \doi{10.1016/j.matpur.2018.10.003}.
\newblock \urlprefix\url{https://doi.org/10.1016/j.matpur.2018.10.003}

\bibitem{BNO20a}
Burman, E., Nechita, M., Oksanen, L.: A stabilized finite element method for
  inverse problems subject to the convection-diffusion equation. {I}:
  diffusion-dominated regime.
\newblock Numer. Math. \textbf{144}(3), 451--477 (2020).
\newblock \doi{10.1007/s00211-019-01087-x}.
\newblock \urlprefix\url{https://doi.org/10.1007/s00211-019-01087-x}

\bibitem{BO18}
Burman, E., Oksanen, L.: Data assimilation for the heat equation using
  stabilized finite element methods.
\newblock Numer. Math. \textbf{139}(3), 505--528 (2018).
\newblock \doi{10.1007/s00211-018-0949-3}.
\newblock \urlprefix\url{https://doi.org/10.1007/s00211-018-0949-3}

\bibitem{burman2023finite}
Burman, E., Oksanen, L.: Finite element approximation of unique continuation of
  functions with finite dimensional trace (2023).
\newblock Preprint arXiv 2305.06800

\bibitem{Cea64}
C\'{e}a, J.: Approximation variationnelle des probl\`emes aux limites.
\newblock Ann. Inst. Fourier (Grenoble) \textbf{14}(fasc. 2), 345--444 (1964)

\bibitem{DMS23}
Dahmen, W., Monsuur, H., Stevenson, R.: Least squares solvers for ill-posed
  {PDE}s that are conditionally stable.
\newblock ESAIM Math. Model. Numer. Anal. \textbf{57}(4), 2227--2255 (2023).
\newblock \doi{10.1051/m2an/2023050}.
\newblock \urlprefix\url{https://doi.org/10.1051/m2an/2023050}

\bibitem{Engl83}
Engl, H.W.: Regularization by least-squares collocation.
\newblock In: Numerical treatment of inverse problems in differential and
  integral equations ({H}eidelberg, 1982), \emph{Progr. Sci. Comput.}, vol.~2,
  pp. 345--354. Birkh\"{a}user Boston, Boston, MA (1983)

\bibitem{EN87}
Engl, H.W., Neubauer, A.: On projection methods for solving linear ill-posed
  problems.
\newblock In: Model optimization in exploration geophysics ({B}erlin, 1986),
  \emph{Theory Practice Appl. Geophys.}, vol.~1, pp. 73--92. Friedr. Vieweg,
  Braunschweig (1987)

\bibitem{EHN88}
Engl, H.W., Neubauer, A.: Convergence rates for {T}ikhonov regularization in
  finite-dimensional subspaces of {H}ilbert scales.
\newblock Proc. Amer. Math. Soc. \textbf{102}(3), 587--592 (1988).
\newblock \doi{10.2307/2047228}.
\newblock \urlprefix\url{https://doi.org/10.2307/2047228}

\bibitem{EFV06}
Escauriaza, L., Fern\'{a}ndez, F.J., Vessella, S.: Doubling properties of
  caloric functions.
\newblock Appl. Anal. \textbf{85}(1-3), 205--223 (2006).
\newblock \doi{10.1080/00036810500277082}.
\newblock \urlprefix\url{https://doi.org/10.1080/00036810500277082}

\bibitem{HAG02}
H\"{a}marik, U., Avi, E., Ganina, A.: On the solution of ill-posed problems by
  projection methods with a posteriori choice of the discretization level.
\newblock Math. Model. Anal. \textbf{7}(2), 241--252 (2002)

\bibitem{Hel71}
Helfrich, H.P.: Optimale lineare {A}pproximation beschr\"{a}nkter {M}engen in
  normierten {R}\"{a}umen.
\newblock J. Approximation Theory \textbf{4}, 165--182 (1971).
\newblock \doi{10.1016/0021-9045(71)90027-x}.
\newblock \urlprefix\url{https://doi.org/10.1016/0021-9045(71)90027-x}

\bibitem{IJ15}
Ito, K., Jin, B.: Inverse problems, \emph{Series on Applied Mathematics},
  vol.~22.
\newblock World Scientific Publishing Co. Pte. Ltd., Hackensack, NJ (2015).
\newblock Tikhonov theory and algorithms

\bibitem{Fritz60}
John, F.: Continuous dependence on data for solutions of partial differential
  equations with a prescribed bound.
\newblock Comm. Pure Appl. Math. \textbf{13}, 551--585 (1960)

\bibitem{Kalt00}
Kaltenbacher, B.: Regularization by projection with a posteriori discretization
  level choice for linear and nonlinear ill-posed problems.
\newblock Inverse Problems \textbf{16}(5), 1523--1539 (2000).
\newblock \doi{10.1088/0266-5611/16/5/322}.
\newblock \urlprefix\url{https://doi.org/10.1088/0266-5611/16/5/322}

\bibitem{LL69}
Latt\`es, R., Lions, J.L.: The method of quasi-reversibility. {A}pplications to
  partial differential equations.
\newblock Translated from the French edition and edited by Richard Bellman.
  Modern Analytic and Computational Methods in Science and Mathematics, No. 18.
  American Elsevier Publishing Co., Inc., New York (1969)

\bibitem{LM54}
Lax, P.D., Milgram, A.N.: Parabolic equations.
\newblock In: Contributions to the theory of partial differential equations,
  Annals of Mathematics Studies, no. 33, pp. 167--190. Princeton University
  Press, Princeton, N. J. (1954)

\bibitem{LNW10}
Lin, C.L., Nakamura, G., Wang, J.N.: Optimal three-ball inequalities and
  quantitative uniqueness for the {L}am\'{e} system with {L}ipschitz
  coefficients.
\newblock Duke Math. J. \textbf{155}(1), 189--204 (2010).
\newblock \doi{10.1215/00127094-2010-054}.
\newblock \urlprefix\url{https://doi.org/10.1215/00127094-2010-054}

\bibitem{LUW10}
Lin, C.L., Uhlmann, G., Wang, J.N.: Optimal three-ball inequalities and
  quantitative uniqueness for the {S}tokes system.
\newblock Discrete Contin. Dyn. Syst. \textbf{28}(3), 1273--1290 (2010).
\newblock \doi{10.3934/dcds.2010.28.1273}.
\newblock \urlprefix\url{https://doi.org/10.3934/dcds.2010.28.1273}

\bibitem{Lu88}
Lukas, M.A.: Convergence rates for regularized solutions.
\newblock Math. Comp. \textbf{51}(183), 107--131 (1988).
\newblock \doi{10.2307/2008582}.
\newblock \urlprefix\url{https://doi.org/10.2307/2008582}

\bibitem{MP01}
Math\'{e}, P., Pereverzev, S.V.: Optimal discretization of inverse problems in
  {H}ilbert scales. {R}egularization and self-regularization of projection
  methods.
\newblock SIAM J. Numer. Anal. \textbf{38}(6), 1999--2021 (2001).
\newblock \doi{10.1137/S003614299936175X}.
\newblock \urlprefix\url{https://doi.org/10.1137/S003614299936175X}

\bibitem{Miller62}
Miller, K.: Three circle theorems in partial differential equations and
  applications to improperly posed problems.
\newblock Ph.D. thesis, Rice University (1962)

\bibitem{Miller73}
Miller, K.: Stabilized quasi-reversibility and other nearly-best-possible
  methods for non-well-posed problems.
\newblock In: Symposium on {N}on-{W}ell-{P}osed {P}roblems and {L}ogarithmic
  {C}onvexity ({H}eriot-{W}att {U}niv., {E}dinburgh, 1972), pp. 161--176.
  Lecture Notes in Math., Vol. 316 (1973)

\bibitem{MM22}
Mishra, S., Molinaro, R.: {Estimates on the generalization error of
  physics-informed neural networks for approximating a class of inverse
  problems for PDEs}.
\newblock IMA Journal of Numerical Analysis \textbf{42}(2), 981--1022 (2022).
\newblock \doi{10.1093/imanum/drab032}.
\newblock \urlprefix\url{https://doi.org/10.1093/imanum/drab032}

\bibitem{MS99}
Monk, P., S\"uli, E.: The adaptive computation of far-field patterns by a
  posteriori error estimation of linear functionals.
\newblock SIAM J. Numer. Anal. \textbf{36}(1), 251--274 (1999).
\newblock \doi{10.1137/S0036142997315172}.
\newblock \urlprefix\url{http://dx.doi.org/10.1137/S0036142997315172}

\bibitem{Nat77}
Natterer, F.: The finite element method for ill-posed problems.
\newblock RAIRO Anal. Num\'{e}r. \textbf{11}(3), 271--278 (1977).
\newblock \doi{10.1051/m2an/1977110302711}.
\newblock \urlprefix\url{https://doi.org/10.1051/m2an/1977110302711}

\bibitem{Natt77}
Natterer, F.: Regularisierung schlecht gestellter {P}robleme durch
  {P}rojektionsverfahren.
\newblock Numer. Math. \textbf{28}(3), 329--341 (1977).
\newblock \doi{10.1007/BF01389972}.
\newblock \urlprefix\url{https://doi.org/10.1007/BF01389972}

\bibitem{Nat84}
Natterer, F.: Error bounds for {T}ikhonov regularization in {H}ilbert scales.
\newblock Applicable Anal. \textbf{18}(1-2), 29--37 (1984).
\newblock \doi{10.1080/00036818408839508}.
\newblock \urlprefix\url{https://doi.org/10.1080/00036818408839508}

\bibitem{Nit70}
Nitsche, J.: Lineare {S}pline-{F}unktionen und die {M}ethoden von {R}itz
  f\"{u}r elliptische {R}andwertprobleme.
\newblock Arch. Rational Mech. Anal. \textbf{36}, 348--355 (1970).
\newblock \doi{10.1007/BF00282271}.
\newblock \urlprefix\url{https://doi.org/10.1007/BF00282271}

\bibitem{TA77}
Tikhonov, A.N., Arsenin, V.Y.: Solutions of ill-posed problems.
\newblock V. H. Winston \& Sons, Washington, D.C.: John Wiley \& Sons, New
  York-Toronto, Ont.-London (1977).
\newblock Translated from the Russian, Preface by translation editor Fritz
  John, Scripta Series in Mathematics

\bibitem{Trefethen20}
Trefethen, L.N.: Quantifying the ill-conditioning of analytic continuation.
\newblock BIT Numerical Mathematics \textbf{60}(4), 901--915 (2020).
\newblock \doi{10.1007/s10543-020-00802-7}.
\newblock \urlprefix\url{https://doi.org/10.1007/s10543-020-00802-7}

\bibitem{Trefethen23}
Trefethen, L.N.: Numerical analytic continuation.
\newblock Japan Journal of Industrial and Applied Mathematics \textbf{40}(3),
  1587--1636 (2023).
\newblock \doi{10.1007/s13160-023-00599-2}.
\newblock \urlprefix\url{https://doi.org/10.1007/s13160-023-00599-2}

\bibitem{Zlamal68}
Zl\'{a}mal, M.: On the finite element method.
\newblock Numer. Math. \textbf{12}, 394--409 (1968).
\newblock \doi{10.1007/BF02161362}.
\newblock \urlprefix\url{https://doi.org/10.1007/BF02161362}

\bibitem{Zlamal70}
Zl\'{a}mal, M.: Curved elements in the finite element method. {I}.
\newblock SIAM J. Numer. Anal. \textbf{10}, 229--240 (1973).
\newblock \doi{10.1137/0710022}.
\newblock \urlprefix\url{https://doi.org/10.1137/0710022}

\end{thebibliography}

\end{document}